\newif\iflabels
\numberwithin{equation}{section}
\theoremstyle{plain}                
\newtheorem{theorem}{Theorem}[section]
\newtheorem{lemma}[theorem]{Lemma}
\newtheorem{proposition}[theorem]{Proposition}
\newtheorem{assumption}[theorem]{Assumption}
\theoremstyle{definition}
\theoremstyle{remark}
\newtheorem{remark}[theorem]{Remark}
\providecommand{\alias}{}
\renewcommand{\alias}[1]{\providecommand{#1}{}\renewcommand{#1} }
\DeclarePairedDelimiter\ab{\langle}{\rangle} 
\DeclarePairedDelimiter\abs{\lvert}{\rvert}   
\DeclarePairedDelimiter\norm{\lVert}{\rVert}  
\DeclarePairedDelimiter\bkt{[}{]}             
\DeclarePairedDelimiter\brc{\{}{\}}           
\DeclarePairedDelimiter\prn{(}{)}             
\providecommand\giv{}
\DeclarePairedDelimiterXPP\pp[1]{\mathbb{P}}[]{}{
   \renewcommand\giv{\nonscript\:\delimsize\vert\nonscript\:\mathopen{}}
   #1}
\DeclarePairedDelimiterXPP\ppup[2]{\mathbb{P}^{#1}}[]{}{
   \renewcommand\giv{\nonscript\:\delimsize\vert\nonscript\:\mathopen{}}
   #2}
\DeclarePairedDelimiterXPP\ppdown[2]{\mathbb{P}_{#1}}[]{}{
   \renewcommand\giv{\nonscript\:\delimsize\vert\nonscript\:\mathopen{}}
   #2}
   \DeclarePairedDelimiterXPP\ppupdown[3]{\mathbb{P}^{#1}_{#2}}[]{}{
   \renewcommand\giv{\nonscript\:\delimsize\vert\nonscript\:\mathopen{}}
   #3}
\DeclarePairedDelimiterXPP\ee[1]{\mathbb{E}}[]{}{
   \renewcommand\giv{\nonscript\:\delimsize\vert\nonscript\:\mathopen{}}
   #1}
\DeclarePairedDelimiterXPP\eeup[2]{\mathbb{E}^{#1}}[]{}{
   \renewcommand\giv{\nonscript\:\delimsize\vert\nonscript\:\mathopen{}}
   #2}
\DeclarePairedDelimiterXPP\eedown[2]{\mathbb{E}_{#1}}[]{}{
   \renewcommand\giv{\nonscript\:\delimsize\vert\nonscript\:\mathopen{}}
   #2}
   \DeclarePairedDelimiterXPP\eeupdown[3]{\mathbb{E}^{#1}_{#2}}[]{}{
   \renewcommand\giv{\nonscript\:\delimsize\vert\nonscript\:\mathopen{}}
   #3}
\DeclarePairedDelimiterXPP\eeud[3]{\mathbb{E}^{#1}_{#2}}[]{}{
   \renewcommand\giv{\nonscript\:\delimsize\vert\nonscript\:\mathopen{}}
   #3}
\let\preexp\exp
\let\exp\relax
\DeclarePairedDelimiterXPP\exp[1]{\preexp}(){}{#1}
\newcommand{\Exp}[1]{e^{#1}}
\newcommandx{\prf}[2][2=t\in\bkt{0,T}]{ \set{#1}_{#2}}
\newcommandx{\seq}[2][2=n\in\N]{\brc*{#1}_{#2}}
\newcommandx{\sseq}[2][2=n\in\N]{\{#1\}_{#2}}
\newcommandx{\seqk}[1]{\seq{#1}[k\in\N]}
\newcommandx{\seqkz}[1]{\seq{#1}[k\in\N_0]}
\newcommandx{\sseqk}[1]{\sseq{#1}[k\in\N]}
\newcommandx{\sseqkz}[1]{\sseq{#1}[k\in\N_0]}
\newcommandx{\fml}[2]{\set{#1}_{#2}}
\newcommandx{\tol}[1]{\stackrel{#1}{\longrightarrow}}
\newcommandx{\eqd}[1][1=d]{\stackrel{(#1)}{=}}
\newcommandx{\neqd}[1][1=d]{\stackrel{(#1)}{\neq}}
\let\oldabs\abs \def\abs{\@ifstar{\oldabs}{\oldabs*}}
\let\oldab\ab \def\ab{\@ifstar{\oldab}{\oldab*}}
\let\oldnorm\norm \def\norm{\@ifstar{\oldnorm}{\oldnorm*}}
\let\oldbkt\bkt \def\bkt{\@ifstar{\oldbkt}{\oldbkt*}}
\let\oldbrc\brc \def\brc{\@ifstar{\oldbrc}{\oldbrc*}}
\let\oldprn\prn \def\prn{\@ifstar{\oldprn}{\oldprn*}}
\let\oldpp\pp \def\pp{\@ifstar{\oldpp}{\oldpp*}}
\let\oldppup\ppup \def\ppup{\@ifstar{\oldppup}{\oldppup*}}
\let\oldppdown\ppdown \def\ppdown{\@ifstar{\oldppdown}{\oldppdown*}}
\let\oldee\ee \def\ee{\@ifstar{\oldee}{\oldee*}}
\let\oldeeup\eeup \def\eeup{\@ifstar{\oldeeup}{\oldeeup*}}
\let\oldeedown\eedown \def\eedown{\@ifstar{\oldeedown}{\oldeedown*}}
\let\oldeeupdown\eeupdown \def\eeupdown{\@ifstar{\oldeeupdown}{\oldeeupdown*}}
\let\oldeeud\eeud \def\eeud{\@ifstar{\oldeeud}{\oldeeud*}}
\let\oldexp\exp \def\exp{\@ifstar{\oldexp}{\oldexp*}}
\newcommand{\Bprn}[1]{ \prn*[\Big]{#1} }
\newcommand{\bprn}[1]{ \prn*[\big]{#1} }
\newcommand{\set}[1]{ \brc{#1} }
\newcommand{\sets}[2]{ \brc{#1\,:\,#2} }
\alias{\R}{{\mathbb R}}
\alias{\C}{{\mathbb C}}
\alias{\Z}{{\mathbb Z}}
\alias{\N}{{\mathbb N}}
\alias{\Nz}{{\mathbb N}_0}
\DeclareMathOperator\Int{Int}
\DeclareMathOperator\Var{Var}
\DeclareMathOperator\Cov{Cov}
\DeclareMathOperator\corr{corr}
\newcommand{\oo}[1]{\frac{1}{#1}}
\newcommand{\too}[1]{\tfrac{1}{#1}}
\newcommand{\tot}{\too{2}} 
\newcommand{\inds}[1]{ 1_{\set{#1}}} 
\newcommand{\downto}{\searrow}
\newcommand{\upto}{\nearrow}
\newcommand{\al}{\alpha}
\newcommand{\be}{\beta}
\newcommand{\dl}{\delta}
\newcommand{\eps}{\varepsilon}
\newcommand{\ld}{\lambda}
\newcommand{\gm}{\gamma}
\newcommand{\vp}{\varphi}
\newcommand{\sD}{\mathcal{D}} 
\newcommand{\sF}{\mathcal{F}} 
\newcommand{\sG}{\mathcal{G}}
 \newcommand{\bN}{\mathbb{N}}
 \newcommand{\bP}{\mathbb{P}}
 \newcommand{\bQ}{\mathbb{Q}}
\newcommand{\sT}{\mathcal{T}}
\newcommand{\PP}{\bP}
\newcommandx{\pds}[2][1= ]{\frac{\partial #1}{\partial #2}}
\newcommandx{\tpds}[2][1= ]{\tfrac{\partial #1}{\partial #2}}
\newcommandx{\pdt}[2][1= ]{\frac{\partial^2 #1}{\partial #2}}
\newcommandx{\tpdt}[2][1= ]{\tfrac{\partial^2 #1}{\partial #2}}
\newcommandx{\pdm}[3][2= ]{\frac{\partial^{#1} #2}{\partial #3}}
\newcommandx{\tpdm}[4][2= ]{\tfrac{\partial^{#1} #2}{\partial #3}}
\newcommandx{\upp}[2]{{#1}^{(#2)}}
\newcommand{\efor}{\text{ for }}
\newcommand{\eforall}{\text{ for all }}
\newcommand{\eforeach}{\text{ for each }}
\newcommand{\eand}{\text{ and }}
\newcommand{\ewhere}{\text{ where }}
\newcommand{\ewith}{\text{ with }}
\newcommand{\cd}{c\` adl\` ag}
\newcommand{\dlink}[2]{\href{http://dlmf.nist.gov/#1}{#2}}
\newcommand{\cdg}[2]{\cite[\dlink{#1}{#2}]{DLMF}}
\newcommand{\cdl}[2]{\cdg{#1.E#2}{(#1.#2)}}
\newcommand{\eenup}[2]{\eeupdown{#1}{n}{#2}}
\newcommand{\eenmu}[1]{\eenup{m_n}{#1}}
\newcommand{\eennu}[1]{\eenup{\nu}{#1}}
\newcommand{\eennn}[1]{\eenup{\nu_n}{#1}}
\newcommand{\eenx}[1]{\eenup{x}{#1}}
\newcommand{\een}[1]{\eenup{}{#1}}
\newcommand{\eenmn}[1]{\eeupdown{m_n}{n}{#1}}
\newcommand{\ppnup}[2]{\ppupdown{#1}{n}{#2}}
\newcommand{\ppnx}[1]{\ppnup{x}{#1}}
\newcommand{\ppnnu}[1]{\ppnup{\nu_n}{#1}}
\newcommand{\ppn}[1]{\ppnup{}{#1}}
\newcommand{\jbn}{j_{\be-1,n}}
\newcommand{\jnn}{j_{\nu,n}}
\newcommand{\old}{\overline{\ld}}
\newcommand{\prfi}[1]{\{ #1 \}_{t \in [0,\infty)}}
\newcommand{\psim}{\psi^{\mu}} 
\newcommand{\zetam}{\zeta^{\mu}} 
\newcommand{\psiz}{\psi^0}
\newcommand{\seqn}[1]{\{#1\}_{n}}
\newcommand{\tgt}{T^{x_n,t}}
\newcommand{\tg}{\tilde{g}}
\newcommand{\umnpp}{(\umn)''}
\newcommand{\umnp}{(\umn)'}
\newcommand{\umn}{u^{\mu}_n}
\newcommand{\um}{u^{\mu}}
\newcommand{\vpm}{\vp^{\mu}} 
\newcommand{\vpz}{\vp^0}
\newcommand{\tm}{T^{\Delta}}
\newcommand{\wm}{w^{\mu}}
\newcommand{\movealign}[1]{\hspace{-#1em}&\hspace{#1em}}
\begin{document}

\title{A functional limit theorem for additive functionals}

\author{Thibaud Taillefumier}
\address{
  Department of Neuroscience and Center for Theoretical and
  Computational Neuroscience, and
  Department of Mathematics, The University of Texas at Austin}
\email{ttaillef@austin.utexas.edu}

\author{Gordan {\v Z}itkovi{\' c}}
\address{Department of Mathematics, The University of Texas at Austin}
\email{gordanz@math.utexas.edu}

\subjclass[2020]{
  60F17, 
  92B99, 
  60J55, 
  60J60, 
  60G51  
}

\begin{abstract} We study a general limiting framework for the
  convergence of sequences of additive functionals of diffusions to
  L\'evy subordinators, and we provide explicit sufficient conditions
  that both ensure convergence and characterize the law of the limit.
  As an application, we identify a novel limiting regime for
  Wright–Fisher and Feller diffusions in the reflecting case and
  describe the corresponding limiting subordinator. This work is
  motivated by, and has applications in, neuroscience, where reflected
  diffusions are used to parametrize synchrony in doubly-stochastic
  models of spiking activity. 
\end{abstract}

\maketitle

\section{Introduction}
\subsection{Related Literature}
This work focuses on a class of limiting results for additive functionals
of recurrent diffusions and more general Markov processes. Problems of this type have been studied since at least the early 1950s, with particular emphasis on small- and large-parameter asymptotics for additive functionals of fixed diffusion processes. Some of the earliest contributions (see, e.g., \cite{KalRob53}) addressed the convergence of marginal distributions of integral functionals of Brownian motion. Subsequent developments shifted the focus to functional convergence and to more general underlying processes.

A foundational contribution in this direction is the work of
Papanicolaou, Stroock, and Varadhan \cite{PapStrVar77}, which
introduced the powerful martingale method. Kasahara and Kotani
\cite{KasKot79} further advanced the theory by considering
nonstandard normalizations and potentially nongaussian limits, and
by introducing $M_1$-con\-ver\-gen\-ce,   a mode of convergence
particularly suited for convergence to discontinuous limits, and one that we also adopt in the present work. These results were later extended by Yamada \cite{Yam86} to encompass a broader class of less regular integral functionals.

Around the same time, Kipnis and Varadhan \cite{KipVar86} laid the groundwork for a general framework for functional central limit theorems (FCLTs) and Gaussian limits in the setting of reversible diffusions. More recently, Cattiaux, Chafaï, and Guillin \cite{CatChaGui21} have provided a modern perspective on this framework and its subsequent developments, employing a PDE-based approach that complements the original probabilistic techniques. We refer the reader to their work for an extensive list of further references tracing the evolution of the Kipnis–Varadhan theory.
Although it is not strictly within the Markovian context, we also
cite the work of Hu, Nualart, and Xu \cite{HuNuaXu14} which extends
the martingale method of \cite{PapStrVar77} to integral functionals of
fractional Brownian motion.

As far as L\' evy-process limits are concerned, the literature seems
to be much less developed. A recent study most directly related to
ours is that of \cite{Bet23}, which establishes an FCLT for sequences
of integral functionals of one-dimensional diffusions, with stable
processes appearing as the limiting objects. That approach builds on
the method of \cite{FouTar18}, which leverages a representation of
one-dimensional diffusions as deterministic transformations of a
time-changed Brownian motion. This representation effectively reduces
the analysis to Brownian functionals and enables an elegant
probabilistic derivation of stable limits without any PDE input. We
also mention the work \cite{JarKomOll09}  of Jara, Komorovski and Olla
in the Markov-chain setting and \cite{MelMisMou11} in the setting of
kinetic theory. 

\subsection{Our contributions}

While situated within the broader trajectory of the literature
reviewed above, the present work adopts a distinct framework. Beyond
extending convergence results to encompass a wider class of additive
functionals and underlying diffusions, our approach diverges from the
standard FCLT-centered paradigm. Specifically, we allow both the
additive functional and the characteristics of the diffusion to vary
along the limiting sequence. This enables us to consider limiting
regimes that go beyond the classical center-and-rescale procedure
applied to a fixed process.

In this sense, our setting parallels the triangular array framework of
Kolmogorov and Gnedenko, which is capable of producing general
infinitely divisible limits, in contrast to the fixed-i.i.d.~scaling
that yields only stable laws. Analogously, our framework accommodates
a broader spectrum of limiting behavior by varying the input processes
themselves. To maintain sufficient generality while preserving
tractability, we restrict our attention to positive additive
functionals, which naturally lead to subordinator limits.

We now outline the two principal contributions of the paper: a general
convergence result for positive additive functionals of diffusions,
and an explicit convergence result for a specific sequence of
Wright–Fisher as well as Feller (CIR) diffusions.

\smallskip

\subsubsection{A general convergence result} 
We consider a sequence of one-dimensional
diffusions, each defined on some  interval in $\R$, as well as 
a sequence of associated 
nondecreasing continuous additive functionals. 
We impose no assumptions
on these diffusions or their boundary behavior except for positive
recurrence. Our main result (Theorem
\ref{main-2}) gives sufficient conditions on the characteristics of
the diffusions (speed measures and scale functions) and the
characteristics of the additive functionals (representing, i.e.,
Revuz 
measures) for convergence of the additive functionals to a L\' evy
subordinator. These conditions are stated in terms of the
limiting properties of the sequence of fundamental solutions associated with the diffusions
killed at ``rates'' dictated by the additive functionals. 
The killing operation allows us to combine each diffusion in the
sequence with its 
additive functional into a single killed diffusion which
can then be further analyzed. As a result, we derive a purely analytic
criterion for convergence in this framework, and give an expression
for the Laplace exponent of the limiting subordinator.

In addition to the idea of merging diffusions and their additive
functionals described above, the proof of Theorem \ref{main-2} relies
on an abstract convergence result in the $M_1$-topology (Theorem \ref{main-1}) for general
recurrent strong Markov processes. Beyond its role in later
developments, the strength of this, abstract, result lies in the fact
that it rests on a fairly elementary argument 
and
applies in great generality. It is stated in terms of resolvents of 
the additive functionals and, without relying on the existence (or
even the notion of) the local time, formalizes the following
intuition from the diffusion case: 
each additive functional in the sequence, time
changed by the inverse local time of its underlying diffusion (at an
appropriate ``reset point'') is a L\' evy subordinator. If laws of
these subordinators converge and the jumps of the inverse local times
shrink, the additive functionals themselves should converge to a
L\' evy subordinator.

Theorem \ref{main-1} demonstrates that the above program can be formalized
under the appropriate conditions on the initial distributions of
the diffusions and minimal tightness-type conditions on the
additive functionals. Moreover, 
it singles out Skorokhod's $M_1$-topology
 --- which has already appeared in a related context, (see, e.g.,
\cite{KasKot79}) ---
as the appropriate one in our setting. We note that one cannot hope for a
significantly stronger convergence,  such as $J_1$-convergence, 
than the one induced by the $M_1$-topology. Indeed, a typical
application of the theorem involves convergence of a sequence of
continuous processes to a discontinuous one. 

\subsubsection{The Wright-Fisher diffusion} 
The second part of the paper
introduces a novel limiting regime 
for a sequence of Wright-Fisher diffusions. The law of
each diffusion is determined by three parameters, $\al_n$, $\be_n$
and $\tau_n$;
$\al_n$ and $\be_n$ dictate
the shape of the stationary beta distribution, while $\tau_n$ plays a
role in scaling and time correlation. 
In our regime $\be_n = \be$ is fixed, while $\al_n$ and $\tau_n$
converge to $0$ at the same rate. The additive functionals we
study are simply the integrals $A_n(t) = \int_0^t X_n(u)\, du$. 

Relying on Theorem \ref{main-2} described above, we show that the
limiting subordinator exists in this regime and that its Laplace
functional can be expressed in terms of a quotient of modified Bessel
functions. We stress that our analysis is made
significantly more difficult by the fact that explicit expressions for
the fundamental solutions are not available for the killed
Wright-Fisher diffusion. Our approach takes advantage of the
polynomial nature of the Wright-Fisher diffusions and analyzes the
associated Poisson equations using series expansions and the recurrence
relations satisfied by their coefficients. 

To the best of our knowledge the limiting subordinator we obtained has
not appeared in the literature before. Interestingly, its Laplace
exponent appeared in a related but different context in
\cite[eq.~(48), p.~12]{PitYor03} where it is shown to be related to a
certain conditional distribution of a time-changed occupation time of
a Bessel process. The second part of the section
provides a detailed study of various properties of this subordinator:
we determine the range of finite moments of the jump measure, give an
explicit expression for its jump density in terms of the positive
zeros of the Bessel function, provide a computationally efficient
recursive formula for its moments, express its cumulants in terms of
the Rayleigh function, and exhibit an unexpectedly simple
continued-fraction expansion of the Laplace exponent.

In addition to our main example, the Wright-Fisher diffusion, we treat
a sequence of Feller diffusions and associated integral
functionals in a similar limiting parameter regime. Unlike in the 
Wright-Fisher case, fundamental solutions of killed Feller
diffusions admit explicit representations in terms of Kummer
functions. This makes it somewhat easier to prove convergence and
identify the limiting process in the family of inverse-Gaussian
subordinators. 

\subsection{Neuroscientific motivation}
The primary motivation for this work stems from recent attempts to quantitatively 
model correlated neural activity in neuroscience \cite{Becker,Becker2}. 
In these forays, configurations of $K$ neurons are modeled as random vectors 
$(B_{1}, \ldots,B_{K}) \in \{0,1\}^K$ with probability law
\begin{align}\label{eq:indMod}
    \mathbb{P}\left[B_{1}=b_1, \ldots, B_{K}=b_K \right]   =  
    \mathbb{E}\left[  \prod_{k=1}^K Z ^{b_{k}} (1-Z)^{1-b_{k}}  \right]
     ,
\end{align}
where $Z$ has the  distribution $F(dz)$ (called the mixing measure) 
supported by $[0,1]$.
The more dispersed the distribution $F$, the more correlated the spiking activity, 
a phenomenon that  can be quantified by remarking that $\Cov[B_k, B_l] = \Var[Z]$, $k\ne l$, 
so that the pairwise spiking correlation satisfies 
\begin{align*}
    \rho &=
    \corr\left[ B_{k}, B_{l} \right] = 
\mathrm{Var}\left[ Z \right] / (\mathbb{E}\left[Z\right]
(1-\mathbb{E}\left[Z\right] )) \efor k\ne l.
\end{align*}
By exchangeability of the variables $B_k$, correlations are entirely
encoded by the fluctuations of the total number of spiking neurons $S= \sum_{k=1}^K B_{k}$.
In turn, correlated spiking dynamics can be simply obtained by considering  
the sequence $\{S_j\}_{j\in\N}$, where  $\{S_j\}_{j\in\N}$ are iid copies of $S$.
That said, biophysically realistic models require one to consider continuous-time extensions 
of the above discrete-time dynamics, which are typically obtained as scaling limits.
In the iid setting, such a scaling limit is naturally specified by constructing the family $\{S^{\eps}\}_{j\in\N}$, $\eps>0$,
for a family of mixing distributions $\fml{F^{\eps}}{\eps>0}$ on $[0,1]$
 whose means scale linearly with $\eps$ as $\eps \downto 0$. 
The resulting scaling limits
\begin{align}\label{eq:compProc}
    Y(t) = \lim_{\eps \downto 0} \sum_{j=1}^{\lfloor t/\eps \rfloor}
    S^{\eps}_j, t\geq 0, 
\end{align} 
are  compound Poisson processes whose 
jumps  come at rate 
$\lim_{\eps \downto 0} (1-\pp{S^{\eps}=0})/\eps$, with the size $J$ of
each jump distributed as
$\PP[J=k]=\lim_{\eps \downto 0} \pp{ S^{\eps} = k   \giv S^{\eps}>0}$ for
$k=1,\dots, K$. The limiting spiking correlation can be backed out of this
distribution as follows:
\begin{align*}
    \lim_{\eps \downto 0} \rho^{\eps} = \frac{\ee{J(J-1)}}{(K-1)\ee{J}}\, .
\end{align*}

Although practically useful, the scaling limits presented above represent merely a special case
and their construction hinges on unrealistic iid simplifying assumptions.
The results presented in this manuscript address these limitations by constructing 
scaling limits for doubly-stochastic models of spiking activity, which are more realistic 
and do not assume an iid character in discrete time.
Indeed, these models consider total spiking counts defined as random variables 
$\{S_j\}_{j\in\N}$ with 
\begin{align}\label{eq:depMod}
    \mathbb{P}\left[S_1=s_1, \ldots, S_J=s_J \right]   =  
    \mathbb{E}{ \left[ \prod_{j=1}^J  \binom{K}{s_j} Z_j^{s_j} (1-
    Z_j)^{1-s_j} \right] },
\end{align}
for  $J\in\N$ and $s_1,\dots, s_J \in \set{1,\dots, K}$, 
where $Z_j=\int_{j-1}^j X_t \, dt$, $j \in \N$ and $\{X_t\}_{t\geq 0}$ is a continuous-time process with values in $[0,1]$.
In this approach, the process $Z$ represents the fluctuating, shared spiking rate of a neuronal population,
 typically modeled as a Wright-Fisher diffusion. 

\subsection{The structure of the paper.}
After this introduction, we develop a general convergence theorem for
reflected diffusions in Section \ref{sec:diffusions}, while a detailed
treatment of the sequence of Wright-Fisher and Feller diffusions is
left for Section \ref{sec:Wright-Fisher}.
Appendix \ref{app:proof} contains a proof of an abstract convergence
theorem for strong Markov processes.

\section{Sufficient conditions for convergence}
\label{sec:diffusions}

In this section we derive sufficient conditions on a sequence of
Markov processes and associated additive functionals for convergence
to a L\' evy subordinator. While our main focus is on the diffusion
framework later on in Theorem \ref{main-2}, we start the section with
a more general result (Theorem \ref{main-1}) for strong Markov processes.

\subsection{General convergence to a L\' evy subordinator}
\label{sse} For a metric space $E$, let $D(E)$ be the set of all \cd{}
functions $\omega:[0,\infty) \to E$, i.e., right-continuous functions
that admit left limits at all $t>0$. $D(E)$ comes naturally equipped
with the $\sigma$-algebra $\sD(E)$ generated by the evaluation maps
$X(t):D(E)\to E$, $X(t)(\omega) = \omega(t)$, as well as with the
family $\prfi{\theta(t)}$, of shift operators $\theta(t) : D(E) \to
D(E)$ given by $\prn{\theta(t)(\omega)}(u)= \omega(t+u)$ for
$t,u\geq 0$.

\smallskip

For $n\in \N$, let $E_n$ be a metric space, $x_n$ a point in $E_n$,
$\bP_n$ a probability measure on $D(E_n)$ and $\sF_n(t)$ a filtration
which contains the 
$\bP_n$-completion of the natural filtration of the canonical process
$X_n=\prfi{X_n(t)}$ made up of evaluation maps on $D(E_n)$. We assume
that $X_n$ is a time-homogeneous strong $\sF_n$-Markov process under $\bP_n$
for each $n\in \N$. More precisely, we assume that for each bounded
random variable $G$ on $D(E_n)$, there exists a bounded measurable
function $\tg_n:E_n\to\R$ such that for each
$\prfi{\sF_n(t)}$-stopping time $\tau$, we have
\begin{align}
  \label{smg}
  \een{G \circ \theta_n(\tau) \giv \sF_n(\tau) } =
  \tg_n(X_n(\tau)),\text{
    $\bP_n$-a.s.~ on } \set{\tau<\infty},
\end{align}
where $\een{\cdot}$ denotes the expectation operator with respect to
$\bP_n$. In fact, we only need the Markov property to hold on
deterministic times and at the following stopping times
\begin{align}
  \label{tgt}
  \tgt_n := \inf \sets{ s\geq t}{ X_n(s) = x_n}.
\end{align}

Given $n\in\N$, let $A_n$ be a nondecreasing additive
functional on $D(E_n)$, i.e., an
$\prfi{\sF_n(t)}$-adapted, \cd~and nondecreasing process with the
property that $A_n(0)=0$ and
\begin{align}
  \label{additive}
  A_n(t+s) = A_n(t)  +(A_n(s)) \circ \theta_t \eforall t\geq 0,
  \bP_n\text{-a.s.}
\end{align}
for each $s\geq 0$.

We recall that for
each  L\' evy subordinator (a nondecreasing L\' evy process) $X$ there
exists a nonnegative function $\Phi$---called the Laplace exponent of
$X$---such that $\ee{\exp{-\mu X_t}}=\exp{ -t \Phi(\mu)}$. We refer
the reader to \cite[Chapter 12]{Whi02} for the definition and the
important properties of Skorokhod's $M_1$-topology.

\begin{theorem}
\label{main-1} Suppose that the following conditions hold:
\begin{enumerate}[label=(\arabic*)]
  \item \label{ttoz} For all $t\geq 0$ and $\eps>0$, we have $\ppn{ \tgt_n
      \geq t+\eps} \to 0$ as $n\to\infty$.
  \item \label{equi} There exist a pair of continuous functions
  $a,b:[0,\infty) \to [0,\infty)$ such that $a(0)=b(0)=0$, $b$ is concave
  and unbounded,  and
  \begin{align*}  \een{ \big. b\big(A_n(t) - A_n(s)\big)  } \leq a(t-s)
  \end{align*}
  for all $0\leq s<t < \infty$ and $n\in\N$.
  \item \label{reso} There exists a constant $\ld >  0$ such 
    that the limit
  \begin{align}
    \label{lim_res}
    R^{\ld,\mu} = \lim_n \een{ \int_0^{\infty}
      \exp{\Big. -\ld t - \mu
        A_n(t)}\, dt}
  \end{align}
  exists for all $\mu \geq 0$.
\end{enumerate}
Then the sequence $\prfi{A_n(t)}$ converges in law, under the
Skorokhod's $M_1$-topology, to a L\' evy subordinator whose Laplace
exponent $\Phi(\mu)$ is given by
\begin{align*} \Phi(\mu) = \frac{1}{R^{\ld,\mu}} - \ld\, .\end{align*}
\end{theorem}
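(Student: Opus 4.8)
The plan is to prove the statement in three movements --- tightness, identification of every subsequential limit as a Lévy subordinator, and the extraction of the Laplace exponent --- after which the usual ``tight $+$ unique subsequential limit'' principle closes the argument. Throughout write $f_n(t)=\een{\Exp{-\mu A_n(t)}}$ for the Laplace transform of the one-dimensional marginals, and $f_n^{x_n}(s)=\mathbb{E}_n^{x_n}\!\left[\Exp{-\mu A_n(s)}\right]$ for the same quantity evaluated at the constant produced by \eqref{smg} when the process is restarted at $x_n$. For \emph{tightness}, I would exploit that each $A_n$ is nondecreasing with $A_n(0)=0$: the completed graph of a monotone function is a monotone curve, so its $M_1$-oscillation modulus vanishes identically, and the $M_1$-tightness criterion of \cite[Chapter 12]{Whi02} reduces to compact containment of the marginals. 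Condition \ref{equi} with $s=0$ gives $\een{A_n(T)}\leq a(T)<\infty$ for every $T$, so Markov's inequality controls $\ppn{A_n(T)>L}$ uniformly in $n$; hence $\prfi{A_n(t)}$ is tight in $M_1$ and, by Prokhorov, every subsequence has a weakly-$M_1$ convergent sub-subsequence.

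The core is the \emph{regeneration} step. Fix $t,s\geq 0$ and set $\sigma:=\tgt_n$, so that $X_n(\sigma)=x_n$. Combining the additivity \eqref{additive} (extended to the stopping time $\sigma$ in the standard way) with the strong Markov property \eqref{smg} applied at $\sigma$ --- where landing at the fixed point $x_n$ turns the conditional expectation into the deterministic constant $f_n^{x_n}(s)$ --- yields
\begin{align*}
  \een{\Exp{-\mu A_n(\sigma+s)}} = \een{\Exp{-\mu A_n(\sigma)}}\, f_n^{x_n}(s).
\end{align*}
I then replace $\sigma$ by $t$ at negligible cost. Using $\Exp{-\mu\,\cdot}\leq 1$, monotonicity of $A_n$, the elementary bound $\Exp{-\mu a}-\Exp{-\mu b}\leq \mu(b-a)$ for $a\leq b$, and additivity, for every $\delta>0$,
\begin{align*}
  \abs{\een{\Exp{-\mu A_n(\sigma)}} - f_n(t)} \leq \mu\,\een{A_n(t+\delta)-A_n(t)} + \ppn{\tgt_n>t+\delta} \leq \mu\,a(\delta) + \ppn{\tgt_n>t+\delta},
\end{align*}
and the identical estimate controls $\abs{\een{\Exp{-\mu A_n(\sigma+s)}}-f_n(t+s)}$. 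By Condition \ref{ttoz} the probability tends to $0$ as $n\to\infty$, and then $a(\delta)\downto 0$ as $\delta\downto 0$ by Condition \ref{equi}. Passing to the limit along the sub-subsequence (so $f_n(t)\to f(t)=\ee{\Exp{-\mu A(t)}}$, at continuity times and hence everywhere since a subordinator has no fixed jumps), I obtain $f(t+s)=f(t)\,\lim_n f_n^{x_n}(s)$. Since $f_n^{x_n}(s)$ does not depend on $t$ while $f(t)>0$, the ratio $f(t+s)/f(t)$ is independent of $t$, which is exactly the multiplicative equation $f(t+s)=f(t)f(s)$; with $f(0)=1$, right-continuity, and $0\le f\le 1$ this forces $f(t)=\Exp{-c(\mu)t}$ for some $c(\mu)\in[0,\infty]$.

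To \emph{identify} $c(\mu)$ and conclude, I use Condition \ref{reso}. Because $\Exp{-\ld t}f_n(t)\leq \Exp{-\ld t}$ is dominated, Fubini and dominated convergence give, along the sub-subsequence,
\begin{align*}
  R^{\ld,\mu} = \lim_n \een{\int_0^\infty \Exp{-\ld t-\mu A_n(t)}\,dt} = \int_0^\infty \Exp{-\ld t} f(t)\,dt = \frac{1}{\ld + c(\mu)},
\end{align*}
so $c(\mu)=1/R^{\ld,\mu}-\ld=:\Phi(\mu)$ is finite and, crucially, the same for every subsequence. Iterating the insertion-of-hitting-times device at the ordered visit times $\tgt_n^{(1)}\leq\cdots\leq\tgt_n^{(k)}$ (ordered because the infimum over a smaller time-set is larger) and applying \eqref{smg} repeatedly factorizes the joint Laplace transforms, so that each subsequential limit $A$ has independent and stationary increments and is therefore the Lévy subordinator with exponent $\Phi$. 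Matching finite-dimensional distributions pins down the law on $D([0,\infty))$; combined with the tightness of the first movement, this forces the whole sequence $\prfi{A_n(t)}$ to converge weakly in $M_1$ to that subordinator.

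I expect the main obstacle to lie entirely in the second movement. The most delicate point is the simultaneous use of Conditions \ref{ttoz} and \ref{equi} to render the overshoot $A_n(\tgt_n)-A_n(t)$ negligible: I deliberately avoid any uniform-integrability assumption by using the boundedness of $\Exp{-\mu A_n}$, splitting on $\set{\tgt_n\leq t+\delta}$ (where monotonicity and Condition \ref{equi} apply) and dominating the complementary event by its probability alone. A second, purely topological subtlety is that one must work in $M_1$ rather than $J_1$ --- precisely because continuous $A_n$ converge to a jump process --- and must justify that matching the finite-dimensional distributions of the \emph{unique} subordinator limit suffices to identify the weak-$M_1$ limit, which is legitimate here only because tightness is already secured. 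The combinatorial bookkeeping in the multivariate factorization is routine, but it is where the independence of increments genuinely enters.
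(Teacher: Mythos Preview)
Your proposal is correct and follows essentially the same route as the paper: $M_1$-tightness from monotonicity together with condition~\itref{equi}, regeneration at the hitting time $\tgt_n$ with the overshoot $A_n(\tgt_n)-A_n(t)$ controlled by splitting on $\{\tgt_n\le t+\delta\}$ and invoking condition~\itref{equi}, and identification of $\Phi$ from condition~\itref{reso}. Two small imprecisions are worth tightening. First, the $M_1$ oscillation modulus does not vanish \emph{identically} for monotone paths: the interior term does, but Whitt's criterion retains a left-boundary term at $t=0$, which is exactly the condition $\ppn{A_n(\delta)>\eps}<\eta$ the paper checks; your condition~\itref{equi} with $s=0$, $t=\delta$ and Markov's inequality disposes of it, but you should say so rather than fold it into ``compact containment''. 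Second, the parenthetical ``hence everywhere since a subordinator has no fixed jumps'' is circular where you place it --- you do not yet know the limit is a subordinator; work first on the dense set of continuity times guaranteed by $M_1$-convergence, derive the multiplicative equation there, and then extend by right-continuity of $t\mapsto\ee{\Exp{-\mu A(t)}}$. Finally, the paper runs the independence argument once with a general $\sF_n(t)$-measurable test function $F_n=f(A_n(t_1),\dots,A_n(t_K))$ multiplying $g(A_n(t+\delta)-A_n(t))$, which yields stationary independent increments in a single pass and sidesteps the iterated-hitting-times bookkeeping you leave for the end; your iteration is workable, but note that when $\tgt_n^{(i)}>t_{i+1}$ two consecutive regeneration times coincide, so the induction needs a little more care than your sketch suggests.
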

Since it is quite technical, but not central to the main focus of the
paper, we relegate the proof of Theorem \ref{main-1} to Appendix
\ref{app:proof}.

\subsection{Convergence in a diffusion framework}
We start by outlining the diffusion framework in which
Theorem~\ref{main-2} holds. Throughout the paper, we use the standard
diffusion terminology without further explanation; for a succinct but
comprehensive summary of the basic notions and standard properties of
one-dimensional diffusions we refer the reader to
\cite[Chapter~II]{BorSal02}. For a  complete treatment, see the
canonical text \cite{ItoMcK74}.

We consider a sequence of one-dimensional diffusion laws without
explosion or killing, whose state spaces $I_n$ are convex subsets of
$\R$; we set
$l_n = \inf I_n \in [-\infty,\infty)$ and $r_n = \sup I_n \in
(-\infty,\infty]$.
Let $(\bP_n^x)_{x \in I_n}$ denote the associated Markov family of
probability measures on the canonical space $C([0,\infty); I_n)$,
where $\bP_n^x$ denotes the law of the process started at $x$ at
time $0$. As usual, the mixture laws
$\bP_n^{\nu} := \int \bP_n^x \, \nu(dx)$ correspond to 
nondeterministic initial conditions.

The probability laws $(\bP_n^x)_{x \in I_n}$ determine the
characteristics of the diffusions: the speed
measures $\seq{m_n}$ and the strictly increasing and continuous
scale functions $\seq{s_n}$. We note that the speed measure is
assumed to be defined on the endpoints $l_n$ and/or $r_n$
whenever they are included in $I_n$. We impose the following standing
assumption:
\begin{assumption}
  $m_n(I_n) = 1$ for all $n \in \N$.
  \label{mno}
\end{assumption}
\begin{remark}
\label{along}
\ 
\begin{enumerate}
  \item 
Since the speed measure $m_n$ is defined only up to a multiplicative
constant, the Assumption \ref{mno} above can be 
weakened to $m_n(I_n) < \infty$ without loss of generality. 
The benefit of this normalization $m_n(I_n) = 1$ is that
$m_n$ becomes the unique invariant probability measure for 
$(\bP_n^x)_{x \in I_n}$.
\item Assumption \ref{mno} is equivalent (see \cite[par.~12, p.~20]{BorSal02})
  to the requirement of 
positive recurrence, namely,
\begin{align*}
  \eenup{x}{T_n^y} < \infty \eforall\, x,y \in I_n, \ewhere 
T_n^y = \inf\sets{ t > 0}{ X_n(t) = y }.
\end{align*}
This implies, in particular, that
a boundary point $b_n \in \set{l_n, r_n}$ is 
nonsingular if $b_n \in I_n$ and natural
otherwise.
\end{enumerate}
\end{remark}

\medskip

For $n \in \N$, let $A_n$ be a continuous and nondecreasing
additive functional of $X_n$.
More precisely, $A_n$ is a continuous process,
defined on the space $C([0,\infty); I_n)$ with values in $[0,\infty)$, with the property that
for each $s\geq 0$ and $x \in I_n$,  we have \[ A_n(t+s) = A_n(t)
  + (A_n(s)) \circ \theta_t,
  \eforall t \geq 0,\ \bP_n^x\text{-a.s.}\] 
In the statement and proof of Theorem \ref{main-2} below we use
$A_n$ to ``kill'' the process $X_n$. This helps us 
analyze the behavior of $A_n$ and $X_n$ together by studying a single, killed diffusion. 
With that in mind, we let $X^{\mu}_n$ be the process with the
same dynamics as $X_n$, but killed at the ``rate'' $\mu\, dA_n(t) $, with
$\mu>0$. More precisely, 
let $\tau_n$ be an exponentially
distributed random variable with rate $1$, 
defined on suitable extension of the underlying probability space, 
which is 
independent of
$\prfi{X_n(t)}$ under each $\bP^x_n$, $x\in I_n$. We define for each
$t \geq 0$
\begin{align}
  \label{killed}
  X^{\mu}_n(t) =
  \begin{cases}
    X_n(t), & t< T_n^{\Delta} \\
    \Delta, & t\geq T_n^{\Delta},
  \end{cases}
  \ewhere\
  T^{\Delta}_n = \inf\sets{u\geq 0}{ \mu
    A_n(u)  \geq \tau_n},
\end{align}
with $\Delta$ denoting an isolated ``cemetery'' state added to the state
space $I_n$.  This can be equivalently described in terms of the killing
measure $k_n^{\mu}$ of $X_n^{\mu}$, given by $k_n^{\mu} =
  \mu K_n$ where $K_n$ is the \emph{representing (Revuz)
  measure of $A_n$}. More precisely, $K_n$ is the measure on $I_n$ with 
  the property (see
\cite[par.~23., p.~28]{BorSal02}) that
\begin{align*}
  A_n(t) = \int_0^t L_n(t,y)\, K_n(dy) \eforall t\geq 0,\ \bP^x_n\text{-a.s.}
  \eforall x \in I_n,
\end{align*}
where $L_n(t,y)$ denotes the (diffusion) local time of $X_n$ at the
level $y$, accumulated up to time $t$.
In the particular case when $A_n(t) = \int_0^t g_n(X_n(u))\,
  du$, the definition of the local time as an occupation density with
respect to the speed measure $m_n$ implies that $K_n(dy) = g_n(y)\, m_n(dy)$
(see \cite[par.~23., p.~28]{BorSal02}).

\medskip

For $x_n \in I_n$, $\mu \geq 0$ and $\ld>0$ let 
$\zetam_n:I_n \to (0,1]$ be given by
\begin{align}
  \label{zetam}
  \zetam_n(x) = \eenx{ e^{-\ld T^{x_n}_n}_n \inds{ T^{x_n}_n <
  T^{\Delta}_n}}, x \in I_n.
\end{align}
Since $\ppnx{ \tm_n > T^{x_n}_n \giv \sF^X_{T^{x_n}_n}} = \exp{-\mu
  A_n(T^{x_n}_n)}$, $\bP^x_n$-a.s.~by construction,
we also have 
\begin{align}
  \label{zetamA}
  \zetam_n(x) = \eenx{ e^{-\ld T^{x_n}_n - \mu A_n(T^{x_n}_n)}}, x \in I_n.
\end{align}
We observe that the restrictions of $\zetam_n$ to $ I_n \cap (-\infty,
x_n]$  (resp.~$I_n \cap [x_n, \infty)$) coincide (see
\cite[par.~11., p.~10]{BorSal02})
with the \emph{decreasing fundamental solution} $\vpm_n$  
(resp.~\emph{increasing fundamental
solution} $\psim_n$) associated to $X^{\mu}$:
\begin{align}
  \label{fund-vp}
  \begin{split}
    \vpm_n(x) =
    \begin{cases}
      \eenx{ e^{-\ld T^{x_n}_n} \inds{T^{x_n}_n <\tm_n
        }}, & x\geq x_n,\\
      1\big/\eenup{x_n}{ e^{-\ld T^{x}_n} \inds{T^x_n <
          \tm_n }}, & x < x_n,
    \end{cases} \\
    \psim_n(x) =
    \begin{cases}
      \eenx{ e^{-\ld T^{x_n}_n} \inds{T^{x_n}_n < \tm_n }},
      & x\leq x_n,\\
      1\big/\eeup{x_n}{ e^{-\ld T^{x}_n} \inds{T^x_n < \tm_n}},
      & x > x_n.
    \end{cases}
  \end{split}
\end{align}
Note that we normalize the fundamental solutions 
so that $\vpm_n(x_n) = \psim_n(x_n)=1$, and observe that the case $\mu=0$ corresponds to the
fundamental solutions $\vpz_n$ and $\psiz_n$ of the original 
(not killed) processes. Moreover, we have
\begin{align}
  \label{zeta-w}
\zetam_n = \vpm_n \wedge \psim_n.
\end{align}

\medskip

The main result of this section,  Theorem \ref{main-2} below, provides
sufficient and readily verifiable conditions for convergence to a
subordinator (see also Remark \ref{after} following it for additional
intuition behind and clarification of these conditions). The following notation will be used in the statement of
the theorem: for two probability measures $\mu$ and  $\nu$, and a
constant $x$, we write $ \nu \preceq_x \mu$ if 
\begin{align}
  \label{fmu}
  \int f\, d\nu \leq \int f\, d\mu,
\end{align}
for each nonnegative function $f$ which is nonincreasing on
$(-\infty,x]$ and nondecreasing on $[x,\infty)$. 
It is not difficult to see that this is equivalent to each of the
following two conditions: \ i) 
There exist two random variables $X$ and $Y$,
with distributions $\mu$ and $\nu$,
respectively, such that  $Y$ is between $x$ and $X$, i.e., 
  $Y \in [X \wedge x, X \vee x], \ \text{a.s.}$, and 
\ ii) The cumulative distribution functions $F_{\mu}$ and $F_{\nu}$
  satisfy:
  \begin{align*}
    F_{\mu}(y) \leq F_{\nu}(y) \efor y \geq x \eand 
    F_{\mu}(y) \geq F_{\nu}(y) \efor y \leq  x.
  \end{align*}
For a probability measure $\nu_n$ on $I_n$, $\bP_n^{\nu_n} \circ X(t)^{-1}$
denotes the marginal distribution of the coordinate map $X(t)$ under
$\bP_n^{\nu_n}$. 

  \begin{theorem}
  \label{main-2} For $n \in \N$ pick $x_n \in I_n$ and a probability
  measure $\nu_n$ 
  on $I_n$, and let  the function $\zetam_n$ be given by 
  \eqref{zetam}.
Suppose that Assumption \ref{mno} as well as the 
following conditions hold:
\begin{enumerate}[label=(\alph*)]
  \item \label{conv-m} 
   $\bP_n^{\nu_n} \circ X_n(t)^{-1} \preceq_{x_n} m_n$  for all $t\geq
    0$, $n \in \N$ and $\lim_n \int \zeta^0_n(x)\, m_n(dx) = 1$ 
  \item \label{supk} There exist a pair of continuous functions
  $a,b:[0,\infty) \to [0,\infty)$ such that $a(0)=b(0)=0$, $b$ is concave
  and unbounded,  and
  \begin{align*}
    \eennn{ \big. b\big(A_n(t) - A_n(s)\big)  } \leq a(t-s)
  \end{align*}
  for all $0\leq s<t < \infty$ and $n\in\N$.
  \item \label{conv} 
   We have $\lim_n \int \zetam_n(x)\, \nu_n(dx) = 1$ for all 
 $\mu\geq 0$ and 
      the limit $\Phi(\mu) := \lim_n
      \Phi_n(\mu)$ exists in $\R$, where
    \begin{align}
      \label{Psin}
      \Phi_n(\mu):= \mu \frac{\int \zetam_n(x)\, K_n(dx)}{\int
        \zetam_n(x)\, m_n(dx)},
    \end{align}
    and $K_n$ is the representing measure of $A_n$.
\end{enumerate}
Then the $\bP^{\nu_n}_n$-laws of the additive functionals $A_n$
converge weakly, with respect to Skorokhod's $M_1$ topology,
to the L\' evy subordinator with the Laplace exponent $\Phi$.
\end{theorem}

\begin{proof} 
As Feller ($C_b \to C_b$) processes, 
diffusions fit into the framework and satisfy the preconditions of 
the abstract convergence result stated before Theorem \ref{main-1}. The
proof proceeds by checking the three conditions of Theorem \ref{main-1} in order.

\medskip

\paragraph{\emph{Condition} \ref{ttoz} of Theorem \ref{main-1}.} 
For a fixed $\eps>0$, let $f(x) =  \ppnx{ T^{x_n} \geq \eps}$. 
The function $f$ is nondecreasing to the right of $x_n$ and
nonincreasing to its left. The condition 
$x_n \preceq \bP_n^{\nu_n} \circ X(t)^{-1} \preceq m_n$
implies that
\begin{align*}
  \eennu{ f( X_n(t)) } = \int f\, d \prn{\bP_n^{\nu_n} \circ X(t)^{-1}} \leq \int
  f\, d m_n = \eenmn{ f(X_n(t))}. 
\end{align*}
The Markov property and invariance of $m_n$ yield
\begin{align*}
  \ppnnu{\tgt \geq t+\eps}
  &= \eeupdown{\nu_n}{n}{\ppupdown{X_n(t)}{n}{ T^{x_n} \geq \eps}}
  = \eeupdown{\nu_n}{n}{f( X_n(t))}
  \\ & \leq \eeupdown{m_n}{n}{f( X_n(t))} 
   = \eeupdown{m_n}{n}{f(X_n(0))} 
   = \ppupdown{m_n}{n}{ T^{x_n} \geq \eps}.
\end{align*}
Finally, using Markov's inequality and the second part of condition \ref{conv-m}, 
we obtain
\begin{align*}
   \ppupdown{m_n}{n}{ T_n^{x_n} \geq \eps} 
  &\leq \oo{1-e^{-\ld \eps}}  \eenmn{1- e^{-\ld T_n^0}}
   = \oo{1-e^{-\ld \eps}} \int (1-\zeta^0_n(x)) \, m_n(dx) \to 0.
\end{align*}

\medskip

\paragraph{\emph{Condition} \ref{equi} of Theorem \ref{main-1}} 
Condition \ref{supk} in Theorem \ref{main-2} is identical to it.

\medskip

\paragraph{\emph{Condition} \ref{reso} of Theorem \ref{main-1}} 
For $x\in I_n$ we set
\begin{align*}
  R_n(x) = \eenx{\int_0^{\infty} e^{-\ld t - \mu A_n(t) }\,
    dt} \eand 
    Q_n(x) = \eenx{\int_0^{T^{x_n}} e^{- \ld t - \mu A_n(t) } \, dt}.
\end{align*}
The strong Markov property implies that
\begin{align*}
  R_n(x) &= \eenx{
    \int_0^{T^{x_n}} e^{- \ld t - \mu A_n(t) }\, dt +
    \int_{T^{x_n}}^{\infty} e^{- \ld t - \mu A_n(t)}  \, dt}\\
  &= Q_n(x) +
  \eenx{ e^{- \ld T^{x_n} - \mu
        A_n(T^{x_n}) } \int_{T^{x_n}}^{\infty}
    e^{-\ld (t-T^{x_n}) - \mu \prn{ A_n(t) - A_n(T^{x_n}) }}\, dt
  } 
 \\ &= Q_n(x) + \eenx{ e^{-\ld T^{x_n} - \mu A_n(T^{x_n}) }} R_n(x_n). 
 \\ &= Q_n(x) + \zetam_n(x) R_n(x_n).
\end{align*}
We have $0\leq R_n(x) \leq 1/\ld$ as well as
\begin{align*}
  0 &\leq  Q_n(x) \leq  \eenx{  \int_0^{T^{x_n}} e^{-\ld t}}=
  \oo{\ld} \prn{1-\eenx{ e^{-\ld T^{x_n}}}} = 
  \oo{\ld} (1 - \zeta^0_n(x)) \leq \oo{\ld} (1-\zetam_n(x)).
\end{align*}
Therefore, the first part of condition \ref{conv-m} implies that
\begin{align*}
  \abs{\eennn{ \int_0^{\infty} e^{-\ld t - \mu A_n(t)}\, dt} -
  R_n(x_n)} \movealign{10} \leq \int_{I_n} \Bprn{ Q_n(x) + R_n(x_n)(1
  - \zetam_n(x))}\, \nu_n(dx) \\
  & \leq \frac{2}{\ld} \int_{I_n} \bprn{1- \zetam_n(x)}\, \nu_n(dx)
  \to 0.
\end{align*}

To analyze the behavior of $R_n(x_n)$ as $n \to \infty$, we
observe that $R_n(x_n) = U_n f(x_n)$ for $f \equiv 1$, where $U_n$
is the resolvent operator associated to $X_n^{\mu}$, i.e., 
\begin{align*}
  U_n f(x) &=
  \eenx{ \int_0^{\infty} e^{-\ld t} f(X_n^{\mu}(t))\, dt} =
  \eenx{ \int_0^{T^{\Delta}_n} e^{-\ld t} f(X(t))\, dt}
           \\ &= \eenx{ \int_0^{\infty} f(X(t)) e^{-\ld t -\mu
           A_n(t)}\, dt}, x \in I_n.
\end{align*}
We assume, first, that $x_n \in \Int I_n$. 
According to \cite[Theorem (50.7), p.~293]{RogWil00v2} and the
subsequent remark, the
kernel of the resolvent operator $U_n$  
is absolutely continuous with respect to $m_n$ and we have the
following expression for $R_n({x_n}) = U_n 1 (x_n)$:
\begin{align*}
  R_n({x_n}) = (\wm_n)^{-1} \int_{I_n} \zetam_n(y) \, m_n(dy),
\end{align*}
where $\wm_n$ is the Wronskian. Its value 
is given by
\begin{align}
 \label{Wronsk}
   \wm_n 
   &= 
   \vpm_n(x) D^- \psim_n(x) - \psim_n(x)  D^- \vpm_n(x) 
\end{align}
where
the right-hand side does not depend on the choice of $x \in \Int I_n$
and 
\begin{align*}
D^{\pm} f(x) &= \lim_{\eps \downto 0} \frac{ f(x\pm\eps) -
  f(x)}{  s_n(x\pm \eps) - s_n(x)}. 
\end{align*}
The functions $\vpm_n$ and $\psim_n$ 
are generalized solutions to the Poisson equation
$\ld u - \sG_n u = 0$ where $\sG_n$ is the infinitesimal generator of
$X^{\mu}_n$
(see \cite[Section II.1,
  par.~10.,pp.~18-19]{BorSal02}). In the case $x_n \in \Int I_n$, 
  this implies that 
  for  $a<b$ with $a,b\in \Int I_n$ with $x_n \in (a,b)$,  we have
  \begin{align*} 
    \ld \int_{[x_n,b]} \vpm_n(x)\, m_n(dx) + \mu \int_{[x_n,b]}
    \vpm_n(x)\,  K_n(dx) 
    &= D^+ \vpm_n(b) - D^- \vpm_n(x_n), \eand \\
    \ld \int_{[a,x_n)} \psim_n(x)\, m_n(dx) + \mu \int_{[a,x_n)}
    \psim_n(x)\,  K_n(dx)
    &= D^- \psim_n(x_n) - D^- \psim_n(a).
  \end{align*}
  It follows from \eqref{Wronsk} that, for $x_n \in \Int I_n$, we
  have $w_n^{\mu} = D^-\psim_n(x_n) - D^-\vpm_n(x_n)$, so that, when added
  together, the previous two equalities yield
  \begin{align}
    \label{refan}
    \ld \int_{[a,b]} \zetam_n(x)\, m_n(dx) + \mu
    \int_{[a,b]} \zetam_n(x)\,  K_n(dx) 
= D^+ \zetam_n(b) - D^- \zetam_n(a) + \wm_n.
  \end{align}
  According  to  \cite[Section II.1,
  par.~10.,pp.~19]{BorSal02} for $l_n = \inf I_n$ we have
  \begin{align}
    \label{bdd}
  D^+ \psim_n(l_n) = \lim_{a \downto l_n} D^- \psim_n(a) = 
  \begin{cases}
  \ld \psim_n(l_n)
  m(\set{l_n}) + \mu \psim_n(l_n) K(\set{l_n}), & l_n \in I_n,\\
  0, & l_n \not\in I_n.
  \end{cases}
  \end{align}
  The analogous statement holds for the right
  end-point $r_n$ of $I_n$. So, by letting $a\downto l_n$ and $b
  \upto r_n$ in \eqref{refan}, we get the following expression
  \begin{align}
    \label{wm-int}
    \wm_n = \ld \int_{I_n} \zetam_n(x)\, m_n(dx) + \mu
    \int_{I_n} \zetam_n(x)\,  K_n(dx).
  \end{align}
  When $x_n$ lies on the boundary of $\Int I_n$ (say $x_n =
  l_n \in I_n$) \eqref{wm-int} still holds without modification. 
  The proof follows the same pattern: we first obtain the expression 
  $\wm_n= D^+\psim_n(l_n) - D^+\vpm_n(l_n)$ by letting
  $x \downto l_n$ in \eqref{Wronsk} and then use the weak-solution
  property and the boundary behavior \eqref{bdd} at $l_n$.

  Now that \eqref{wm-int} is established, we use it to derive the
  following identity:
  \begin{align*}
    \frac{1}{R_n(x_n)}  - \ld  = 
    \frac{\wm_n}{\int_{I_n} \zetam_n(x)\, m_n(dx)} - \ld= 
    \mu \frac{\int_{I_n} \zetam_n(x)\, K_n(dx)}
    {\int_{I_n} \zetam_n(x)\, m_n(dx)}\,.
  \end{align*}
It, along with the second part of our condition \ref{conv}, implies the condition \ref{reso} of Theorem \ref{main-1}.
\end{proof}

\begin{remark} \label{after} The existence of the limit of the functions $\Phi_n$ in
  condition \ref{conv} is, in a sense, the only ``hard'' requirement of
  Theorem \ref{main-2}. Let us comment on the role of the other
  conditions and circumstances in which they hold:
  \begin{enumerate}
    \item    \label{it1} The condition $\bP_n^{\nu_n} \circ X(t)^{-1} \preceq_{x_n}
    m_n$ ensures that the ``reset point'' $x_n$ and the initial
    distribution $\nu_n$ are chosen so
    that the diffusion approaches the stationarity
    ``from within'', relative to $x_n$.  
    It holds automatically when
    $\nu_n = m_n$. Another common situation when it holds (as is
    easily proved via a coupling argument) is
    when $x_n$ is one of the endpoints of $I_n$ and $\nu_n \preceq_{x_n} m_n$. 
    A special case of that, when $\nu_n = \delta_{0}$ and  $I_n =
    [0,r_n)$ is used in the Wright-Fisher example below. 
  \item \label{it2}
    The limiting condition $\int \zeta^0_n(x)\, m_n(dx) \to 1$
    of \ref{conv-m} is
    equivalent to $\ppupdown{m_n}{n}{ T^{x_n}_n > \eps} \to 0$ and
    ensures ``faster and faster mixing'' for the sequence $\seq{X_n}$. It
    is automatically satisfied, for example,  when 
    $X_n(t) = c_n X( v_n t )$ where $X(t)$ is a stationary diffusion
    and $v_n \to \infty$ with appropriate conditions placed on $c_n$.  
    It holds in numerous other cases, 
    such as the one considered in the following section. 
  \item \label{it3}
    The first part of \ref{conv}, namely, $\int \zetam_n(x)\, \nu(dx)
    \to 1$, guarantees that the accumulation of the additive
    functional $A_n$ by the  first ``reset time'' $T^{x_n}_n$ time can
    be asymptotically ignored. A limiting theorem could be proved
    under a much weaker version of this condition, but the
    limiting process would exhibit a nontrivial independent initial
    jump, drawn from a possibly different jump distribution, before
    shifting into the subordinator dynamics. 
  \item \label{it4}
    Condition \ref{supk} is straightforward to check if the functional
    $A_n$ is of the form $A_n(t) = \int_0^t g(X_n(u))\, du$, which
    will be of interest in the following section. A simple
    sufficient condition in that case is that the expectation
    $\eennu{g(X_n(t))}$ be bounded, uniformly in $n\in\N$ and $t$ on
    compacts. This will clearly be the case when $\nu_n = m_n$ and $g$
    is uniformly integrable over all $m_n$. More flexibility, allowed
    by a general function $b$, is needed when $K$ is not absolutely
    continuous with respect to the Lebesgue measure, for example, when
    $A_n$ is a local time. 
\end{enumerate}

\end{remark}

\section{Wright-Fisher Diffusions} \label{sec:Wright-Fisher} In this
section, we present an application of Theorem \ref{main-2} to a
sequence of Wright-Fisher diffusions, the study of which is the
practical motivation for this work. We also consider a sequence of
Feller (CIR) diffusions, which can be seen as limiting cases of
rescaled Wright-Fisher processes when the right end-point of the state
space converges to $+\infty$. 

\subsection{The scaling regime.} Using the notation of Section
\ref{sec:diffusions}, we consider
a sequence $\seq{X_n}$ of diffusions on the state space $I_n=[0,1)$,
parameterized by three sequences $\seq{\tau_n}$,
$\seq{\al_n}$ and $\seq{\beta_n}$ of strictly positive numbers. Their
infinitesimal generators are given by
\begin{align}
  \label{gen-WF}
  \sG_n f(x) = \oo{ \tau_n} x(1 -x) f''(x) +
  \oo{\tau_n} (\al_n (1 - x)  - \be_n x)  f'(x),
\end{align}
for $f \in C_c^{2}((0,1))$. We assume that $X_n(0)=0$, i.e.,
that the initial distribution $\nu_n$ is $\delta_0$.

As our focus will be on the regime $\al_n \to 0$, the Feller
condition at the left boundary point will not be satisfied, rendering 
the boundary at $0$ nonsingular. We impose instantaneous reflection
there, i.e., set $m_n(\set{0})=0$,
since it is not only the cleanest  
choice mathematically, but also best suited to our intended application
to neuroscience. Moreover, 
we assume throughout that $\beta_n > 1$. This guarantees that 
$X_n$ is well-defined in the sense that it does not leave the state space
$[0,1)$ in finite time. Indeed, when $\beta_n > 1$ 
the right boundary $r_n = 1$ is not an exit boundary 
(see \cite[eq.~(6.19), p.~240]{KarTay81}). 

All of this leads to the 
following expressions for the derivatives of the scale
functions and the densities of the speed measures
\begin{align*}
  s_n'(x) & = \tau_n B(\al_n, \be_n)\, x^{-\al_n} (1-x)^{-\be_n}, \\
  m'_n(x) & = \oo{B(\al_n, \be_n)} \, x^{\al_n-1} (1-x)^{\be_n-1},
\end{align*}
where $B(\al,\be) =\Gamma(\al) \Gamma(\be)/\Gamma(\al+\be)$
is the Beta function and $\Gamma(\cdot)$ is the Gamma function. We
refer the reader to \cite[Example 8, p.~239]{KarTay81} for the
details, as well as for a discussion of various properties and
features of the Wright-Fisher diffusion.

\medskip

The scaling regime adopted in this section is
\begin{align}
  \label{pars}
  \tau_n \to 0,  \
  \beta_n = \beta > 1 \eand \frac{\al_n}{\tau_n} \to
  \gamma \text{ for some } \gamma \in (0,\infty),
\end{align}
with the sequence $\seq{A_n}$ of additive functionals given by
\begin{align}
  \label{add-WF}
  A_n(t) =  \oo{\tau_n} \int_0^t X_n(u)\, du.
\end{align}
The particular choices made in \eqref{pars} are partly dictated by
modeling considerations, and partly by their mathematical interest.
Moreover, this regime is essentially forced by the choice that
$\seq{\beta_n}$ be constant, the assumptions of Theorem \ref{main-2},
and the requirement that the limit be nondeterministic. Indeed, as
can be verified directly, we have
\begin{align}
  \label{exp}
  \eenmu{\oo{\tau_n} \int_0^1 X_n(t)\, dt} & =  \frac{
    \al_n}{\tau_n} \frac{ 1}{\al_n+\be}, \quad  \eand
  \\ \label{var}
  \Var^{m_n}_n\bkt{ \oo{\tau_n} \int_0^1 X_n(t)\, dt}
  &=
  \frac{ 2 \be}{ (\al_n+\be)^3 (1+\al_n+\be)}
  \frac{
    e^{-\frac{\al_n+\be}{\tau_n}} - 1 + \frac{ \al_n+\be}{ \tau_n} }{
    \frac{\al_n+\be}{\tau_n}}\, . 
\end{align}
Whence, it follows that $1/\tau_n$ is, indeed, the proper
scaling for $\int_0^1 X_n(t)\, dt$, and that, given that scaling,
the limiting variance is nontrivial only if the limit of
$\al_n/\tau_n$ exists in $(0,\infty)$.

\subsection{Decreasing fundamental solutions} Next, we turn to
the decreasing fundamental solutions $\seq{\vpm_n}$ of the killed
diffusion defined in \eqref{fund-vp} above. We take the analytic
approach and characterize $\vpm_n$, up to a multiplicative constant,
as the unique decreasing solution of the following second-order
ordinary differential equation: 
\begin{align}
  \label{ode-wf}
  \sG_n u(x)
  - \prn{ \ld + \frac{\mu}{\tau_n}  x } u(x)  =0,\ x\in (0,1),\  u(0)=1.
\end{align}
Since the right boundary is natural, no boundary
conditions need to be imposed at the right endpoint.

\medskip

In order to pass to a limit in the following subsection we 
require a more precise 
understanding of the structure of the solution of
\eqref{ode-wf} than is provided by the general theory. Given that we
are working with a polynomial diffusion, i.e., a diffusion with an
infinitesimal generator whose coefficients are polynomials, it is
plausible to expect that the solutions to \eqref{ode-wf} admit power-series
expansions amenable to further analysis. This direct approach also 
turns out to be the most convenient.
To see this, let
us consider a candidate solution $\um_n$ specified as
\begin{align}
  \label{um}
  \um_n(x) = \sum_{k=0}^{\infty} a_n(k) (1-x)^k,
\end{align}
where the coefficient sequence $\seqkz{a_n(k)}$ is defined by the
following recurrence relations:
\begin{align}
  \label{rec-ic}
  a_n(0) &=1,\  a_n(1) = \frac{ \ld\tau_n   + \mu
  }{\be}
  \eand \\
  \label{rec}
  a_n(k)  &= c_n(k-1) a_n(k-1) - c_n(k-2) a_n(k-2), \efor k\geq
  0,
\end{align}
where
\begin{align}
  \label{rec-coeff}
  c_n(k-1) &= \frac{ \ld\tau_n +\mu +   (k-1)(k+\al_n+\be-2)}{ k
    (\beta+k-1)}, \eand \\
  c_n(k-2) &=
  \frac{   \mu    }{ k(\beta+k-1)}.
\end{align}
These recursions are obtained by coefficient matching when
\eqref{um} is formally inserted in \eqref{ode-wf}. Moreover, 
although the equation \eqref{ode-wf} is of second order, the value of
the coefficient $a_n(1)$ is determined by the equation itself
due to the degeneracy of ellipticity at the right boundary. On the
other hand, the choice $a_n(0)=1$ is only a normalization.
\begin{lemma}
\label{lem:a-growth} For each $\eps>0$ there exist constants
$C_{\eps}>0$ and $N_{\eps}\in\N$ such that
\begin{align}
  \label{bd-a}
  \abs{a_n(k)} \leq C_{\eps} k^{-(2-\eps)} \eforall k\in\N
  \eand n\geq N_{\eps}.
\end{align}
\end{lemma}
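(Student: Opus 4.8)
The plan is to read the decay rate directly off the recursion \eqref{rec} and then to promote it to a bound that is uniform in $n$ by a single induction on $k$. Starting from \eqref{rec-coeff} and writing $P := \ld\tau_n + \mu$, one has the exact identity
\begin{align*}
  c_n(k-1) - 1 = \frac{\al_n - 2}{k+\be-1} + \frac{P - \al_n - \be + 2}{k(k+\be-1)}.
\end{align*}
Since the scaling regime \eqref{pars} forces $\tau_n \to 0$ and hence $\al_n \to 0$, the quantities $\al_n$ and $P$ stay bounded, so this yields the expansions
\begin{align*}
  c_n(k-1) = 1 + \frac{\al_n-2}{k} + O\prn{k^{-2}} \eand
  c_n(k-2) = \frac{\mu}{k^2} + O\prn{k^{-3}},
\end{align*}
with $O$-terms bounded uniformly in $n$. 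Because $\be > 1$, both $c_n(k-1)$ and $c_n(k-2)$ are strictly positive for every $k\ge 2$, so \eqref{rec} immediately gives the termwise bound $\abs{a_n(k)} \le c_n(k-1)\abs{a_n(k-1)} + c_n(k-2)\abs{a_n(k-2)}$.

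Fix $\eps>0$. The core of the argument is the inductive step: assuming $\abs{a_n(k-1)}\le C(k-1)^{-(2-\eps)}$ and $\abs{a_n(k-2)}\le C(k-2)^{-(2-\eps)}$, the termwise bound reduces the desired inequality $\abs{a_n(k)}\le Ck^{-(2-\eps)}$ to
\begin{align*}
  c_n(k-1)\prn{\tfrac{k}{k-1}}^{2-\eps} + c_n(k-2)\prn{\tfrac{k}{k-2}}^{2-\eps} \le 1.
\end{align*}
Expanding the left-hand side with the two displays above gives $1 + \frac{\al_n-\eps}{k} + O\prn{k^{-2}}$, again with a uniform $O$-term. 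Since $\al_n\to 0$, there is an $N_\eps$ with $\al_n\le \eps/2$ for all $n\ge N_\eps$, so the linear coefficient is at most $-\eps/2$; this uniformly positive margin dominates the $O(k^{-2})$ correction as soon as $k\ge K_\eps$, for a threshold $K_\eps$ depending only on $\eps$ and the uniform $O$-constant. This establishes the inductive step for all $k\ge K_\eps$ and $n\ge N_\eps$.

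It remains to secure the finitely many base cases $1\le k < K_\eps$ with a single constant. For each fixed $k$, the value $a_n(k)$ is, by \eqref{rec-ic}--\eqref{rec}, a fixed continuous function of the parameters $(\tau_n,\al_n)$, and these converge to $(0,0)$; hence $\sup_n\abs{a_n(k)}<\infty$ for each $k$, and in particular $M_\eps := \sup_n\max_{1\le k<K_\eps}\abs{a_n(k)}<\infty$. Choosing $C_\eps$ large enough that $C_\eps k^{-(2-\eps)}\ge M_\eps$ for all $1\le k<K_\eps$ (for instance $C_\eps = M_\eps K_\eps^{\,2-\eps}$, enlarged if needed) makes \eqref{bd-a} hold on the base range; the inductive step then propagates it to all $k\ge K_\eps$, completing the argument.

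The one genuinely delicate point is the uniformity in $n$. A pointwise Poincar\'e--Perron analysis of \eqref{rec} would identify the dominant solution's tail as $k^{\al_n-2}$, but only with an $n$-dependent constant that could a priori blow up as $\al_n\to 0$. The induction avoids this precisely because the margin $\eps-\al_n\ge \eps/2$ is bounded away from $0$ uniformly in $n\ge N_\eps$, so it controls the uniformly bounded $O(k^{-2})$ correction with a single threshold $K_\eps$ and a single constant $C_\eps$.
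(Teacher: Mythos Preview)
Your proof is correct and follows essentially the same approach as the paper's: rescale by $k^{2-\eps}$, expand the recursion coefficients to see that the inductive step reduces to $1 + (\al_n-\eps)/k + O(k^{-2}) \le 1$, secure the margin $\al_n \le \eps/2$ for large $n$, and then handle the finitely many base cases by the uniform boundedness of the $a_n(k)$ in $n$. The only cosmetic difference is that the paper packages the inductive step via the auxiliary sequence $b_n(k) = \abs{a_n(k)}k^{2-\eps}$ and a function $f(1/k)$ with $f(0)=1$, $f'(0)=-\eps/2$, whereas you carry out the same expansion directly.
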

\begin{proof}
Given $\eps \in (0,1)$, we set $K^1_{\eps} = 8 \be/\eps$ and
choose $N_{\eps} \in \N$, such that $\al_n < \eps/4$ and $\tau_n <
  1$ for $n\geq N_{\eps}$. For $k\geq K^1_{\eps}$ and $n\geq
  N_{\eps}$, we have $\frac{2-\al_n}{k+\be-1} \geq
  \frac{2-\eps/2}{k}$, so that
\begin{align*}
  0 \leq c_n(k-1) &=
  1 -  \frac{(2-\al_n)}{k+\beta-1} + \frac{2+\ld\tau_n-\al_n - \be +
    \mu}{k(k+\be-1)}  \leq 1 - \frac{ \eta + \eps/2}{k} + \frac{\rho}{
    k^2},
\end{align*}
where $\eta = 2 - \eps$, $\old = \sup_n \ld\tau_n<\infty$ and
$\rho = 2+\old+\mu$. We also have
\begin{align*}
  0 \leq  c_n(k-2) & \leq \frac{\mu}{k^2}.
\end{align*}
Let $b_n(k) = \abs{a_n(k)}/k^{-\eta}$, so that, for $k\geq
  K^1_{\eps}$ and $n\geq N_{\eps}$ we have
\begin{align*}
  b_n(k) & =
  \prn{\Big. 1-\frac{\eta+\eps/2}{k} + \frac{\rho}{k^2} }
  \frac{b_n(k-1) (k-1)^{-\eta} }{k^{-\eta}} +
  \frac{\mu}{k^2} \frac{b_n(k-2) (k-2)^{-\eta} }{k^{\eta}}\\
  &\leq \max\prn{\Big. b_n(k-1), b_n(k-2) } f\prn{ 1/k },
\end{align*}
where
\begin{align*}
  f(x)  = (1-x)^{-\eta} \prn{\Big. \rho x^2 - x
    (\eta+\eps/2) + 1} + \mu
  (1-2x)^{-\eta} x^2 \efor x<1/2.
\end{align*}
Clearly, $f$ is $C^1$ on $[0,1/2)$, $f(0) = 1$ and $f'(0) =
  -\eps/2$, so there exists $x_0>0$ such that $f(x)\leq 1$ for $x\in
  [0,x_0]$, i.e.,
\begin{align} b_n(k) \leq \max( b_n(k-1), b_n(k-2)) \efor k\geq
  \label{b-bnd}
  K_{\eps}:=\max(K^1_{\eps}, 1/x_0).\end{align}
The absolute values of the coefficients $c_n(k)$ and the initial
conditions $a_n(0), a_n(1)$ admit $n$-independent bounds, which
implies that
\begin{align}
  \label{B-max}
  B(k):=\sup_n b_n(k) \leq \sup_n k^{\eta}\abs{a_n(k)}<\infty
  \eforeach k \in \N.
\end{align}
Combined with \eqref{b-bnd}, the finiteness of $B(k)$ in
\eqref{B-max} above implies that, for $n \geq N_{\eps}$ we have
\begin{equation*}
  \abs{a_n(k)} k^{-\eta} \leq C_{\eps}:=\max_{k\leq K_{\eps}}
  B(k)<\infty. \qedhere
\end{equation*}
\end{proof}

\begin{proposition}
The function $\um_n$ is well-defined by \eqref{um} on $[0,2]$,
real analytic on $(0,2)$, and we have $\vpm_n = \um_n$ on
$[0,1]$, up to a multiplicative constant.
\end{proposition}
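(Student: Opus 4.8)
The plan is to prove the three assertions in turn, letting the coefficient bound of Lemma~\ref{lem:a-growth} carry the analytic part and a Frobenius analysis at the right boundary carry the identification. First I would fix $\eps\in(0,1)$ and invoke Lemma~\ref{lem:a-growth} to obtain $C_\eps>0$ and $N_\eps\in\N$ with $\abs{a_n(k)}\le C_\eps k^{-(2-\eps)}$ for all $k\in\N$ and $n\ge N_\eps$. Since $2-\eps>1$, the sequence $\seqkz{a_n(k)}$ is absolutely summable, so the power series $\sum_k a_n(k)z^k$ in $z=1-x$ converges absolutely and uniformly on $\abs{z}\le1$; equivalently, \eqref{um} converges absolutely and uniformly for $x\in[0,2]$. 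The same bound gives $\limsup_k\abs{a_n(k)}^{1/k}\le1$, so the radius of convergence is at least $1$ and $\umn$ is real analytic on $(0,2)$. For the finitely many $n<N_\eps$ I would instead read the radius off the Fuchsian structure of \eqref{ode-wf}: after dividing by $\tfrac{1}{\tau_n}x(1-x)$ the only finite singularities are the regular singular points $x=0$ and $x=1$, so a Frobenius solution centered at $x=1$ converges at least up to the distance to $x=0$, namely $1$, and convergence at the endpoint $x=0$ follows from the indicial exponents $0,\,1-\al_n$ there (with $\al_n<1$), which force the summable decay $\abs{a_n(k)}\sim k^{-(2-\al_n)}$.

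Next I would check that $\umn$ actually solves \eqref{ode-wf}. A power series may be differentiated term by term inside its disk of convergence, so substituting \eqref{um} into \eqref{ode-wf} on $(0,2)$ is legitimate, and matching the coefficient of each $(1-x)^k$ to zero reproduces exactly the recursions \eqref{rec-ic}--\eqref{rec-coeff}; the degeneration of the second-order term at $x=1$ is what forces the value of $a_n(1)$, leaving only the normalization $a_n(0)=1$ free. Hence $\umn$ satisfies \eqref{ode-wf} on all of $(0,2)$, in particular on $(0,1)$.

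For the identification, recall that $\vpm_n$ is characterized (up to a constant) as a decreasing solution of \eqref{ode-wf}, and that I have just shown $\umn$ to solve \eqref{ode-wf} as well. On $(0,1)$ this is a regular second-order linear equation with a two-dimensional solution space. Changing variables to $z=1-x$ exhibits $x=1$ as a regular singular point whose indicial equation $r(r-1)+\be_n r=0$ has roots $0$ and $1-\be_n$; since $\be_n=\be>1$, the second local solution grows like $(1-x)^{1-\be}\to\infty$ as $x\upto1$, whereas $\umn$ is the bounded, exponent-$0$ solution. The probabilistic formula \eqref{fund-vp} gives $\vpm_n(x)\le1$ for $x\ge x_\vp$, so $\vpm_n$ too stays bounded as $x\upto1$; hence both lie in the one-dimensional space of solutions bounded at $x=1$, and $\vpm_n=c\,\umn$ on $(0,1)$ for some $c>0$. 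Since $\umn$ extends continuously to $[0,1]$, the identity persists on $[0,1]$ up to that multiplicative constant.

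The delicate coefficient decay is already supplied by Lemma~\ref{lem:a-growth}, so convergence and analyticity are routine; the step that needs care is the identification. Concretely, one must know that $\vpm_n$ is exactly the solution that stays bounded at $x=1$ and that boundedness there determines it up to scale --- precisely what the right boundary being ``entrance'' (guaranteed by $\be>1$) provides, and this is the hinge of the argument. A minor secondary point is the endpoint convergence of \eqref{um} at $x=0$ for the few small $n$, which the indicial exponents $0,\,1-\al_n$ settle.
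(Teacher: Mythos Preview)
Your proof is correct, and the first two steps (convergence/analyticity via Lemma~\ref{lem:a-growth}, and verifying the ODE by term-by-term differentiation) match the paper's argument. You add one welcome refinement: you notice that Lemma~\ref{lem:a-growth} only bounds $|a_n(k)|$ for $n\ge N_\eps$ and supply a separate Frobenius argument for the finitely many small $n$, a point the paper glosses over. Your claimed asymptotic $|a_n(k)|\sim k^{-(2-\al_n)}$ is correct but terse; it deserves a pointer to Darboux's method or the transfer theorem. A simpler patch is to rerun the proof of Lemma~\ref{lem:a-growth} for fixed $n$ with any $\eps>4\al_n$, which is legitimate since $\al_n<1$ is forced by the standing assumption that $0$ is a nonsingular boundary.

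The identification step is where your route genuinely diverges from the paper's. The paper proves directly that $\umn$ is \emph{strictly decreasing} on $(0,1)$ by a maximum-principle-style contradiction: from $\umn(1)=1$ and $(\umn)'(1)=-(\ld\tau_n+\mu)/\be<0$, if $(\umn)'$ vanished at some $x_0$, the ODE at $x_0$ would force $\umn(x_0)\le0$, contradicting $\umn(1)=1$ and $(\umn)'<0$ on $(x_0,1)$. It then invokes the characterization of $\vpm_n$ as the (unique up to scale) decreasing solution. You instead compute the indicial exponents $0$ and $1-\be$ at $x=1$, observe that the second Frobenius solution blows up like $(1-x)^{1-\be}$, so bounded solutions near $x=1$ form a one-dimensional space; since $\vpm_n(x)\le1$ for $x\ge x_\vp$ by \eqref{fund-vp}, both $\umn$ and $\vpm_n$ lie in that space. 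Both arguments are valid. The paper's has the bonus of establishing monotonicity of $\umn$ (not used later, but natural) and is entirely elementary. Yours is more systematic and generalizes cleanly to other entrance boundaries, but leans on Frobenius theory---including the integer-$\be$ case where the exponents differ by an integer; there the second solution may acquire a $\log(1-x)$ term, but the $z^{1-\be}$ contribution still dominates as $z\to0$, so the conclusion stands.
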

\begin{proof}
The bounds in \eqref{bd-a}, for $\eps < 1$, immediately imply that
the series \eqref{um} converges absolutely on $[0,2]$ and 
defines a continuous function there. Analyticity on $(0,2)$ then
follows from the fact that the radius of convergence is at least
$1$. In particular, we can differentiate them term by term and then
perform an easy calculation using \eqref{rec-ic} and \eqref{rec} to
conclude that $\um_n$ solves \eqref{ode-wf} on $(0,1)$ and that
$\um_n(1)=1$, $(\um_n)'(1) = -(\ld\tau_n +\mu)/\be$.

Next, we show that $\um_n$ is strictly decreasing. Arguing by
contradiction, assume first that $\umnp(x) \geq 0$ for some
$x\in (0,1)$, and let $x_0 \in (0,1]$ be the supremum of all such
$x$. Strict negativity of the derivative $(\um_n)'(1)$ implies that
$\umnp<0$ in a neighborhood of $1$, and so, $x_0 <1$. Hence,
$\umnp(x_0)=0$ and $\umnp(x)<0$ for $x\in (x_0,1)$, which, in turn,
implies that $\umnpp(x_0) \leq 0$. Since $\um_n$ satisfies
\eqref{ode-wf}, we must have $\um_n(x_0)\leq 0$. This is in
contradiction with the fact that $\um_n(1)=1 $ and $\umnp(x)\leq 0$
for all $x\in [x_0,1)$.

Finally, we appeal to the general theory of one-dimensional
diffusions (see \cite[Section II.1, par.~10., pp.~18-19]{BorSal02}),
which states that $\vpm_n$ is the unique, up to a multiplicative
constant, decreasing solution to \eqref{ode-wf} (no boundary
conditions needed). Therefore, $\vpm_n$ and $\um_n$ agree on $(0,1)$,
up to a multiplicative constant. By continuity, the same is holds on
$[0,1]$.
\end{proof}

\subsection{Limiting behavior of fundamental solutions.} Next, we analyze the limiting behavior of the sequence $\seqn{\vpm_n}
  = \seq{\um_n}$. Since the coefficients in \eqref{rec} and the
initial conditions \eqref{rec-ic} converge to finite values as
$n\to\infty$, the solutions converge, and we set $a(k):=\lim_n
  a_n(k)$. Moreover, the limiting coefficients satisfy the following
(limiting) recursive equation
\begin{align}
  \label{lim-rec-ic}
  a(0) &=1,\  a(1) = \frac{\mu}{\beta}, \quad \eand \\
  \label{lim-rec}
  a(k)  &=
  \frac{ \mu  +   (k-1)(k+\be-2)}{ k (k+\be-1)}
  a(k-1) -  \frac{   \mu   }{ k(\beta+k-1)} a(k-2)
  \efor k\geq 2.
\end{align}
It is easily checked that \eqref{lim-rec-ic} and \eqref{lim-rec}
admit an explicit solution, namely,
\begin{align}
  \label{expl-a}
  a(k) = \frac{ \mu^k}{ k! (\beta)_k},
\end{align}
where $(\beta)_k := \beta (\beta + 1) \dots (\beta+k-1)$ is the
Pochhammer symbol (also known as the rising factorial). Therefore, we set
\begin{align*}
  \vpm(x) := \sum_{k=0}^{\infty} \frac{\mu^k}{k! (\beta)_k}
  (1-x)^k,
\end{align*}
with absolute convergence for all $x$, and note that

\begin{align*}
  \vpm(x) =
  \Gamma(\be) (\mu(1-x))^{-(1+\be)/2} I_{\be-1}(2 \sqrt{\mu(1-x)})
\end{align*}
where $I_{\nu}$ is the modified Bessel function of the first kind of order $\nu$.

\medskip

By Lemma \ref{lem:a-growth} applied with $\eps= 1/2$, we have
$\abs{a_n(k) - a(k)} \leq C k^{-3/2}$ for some $C>0$, all $k\in\N$,
and large enough $n\in\N$. Therefore, we can use the dominated
convergence theorem to conclude that $\lim_n \sum_k \abs{a_n(k) -
a(k)}=0$, so that
\begin{align}
  \label{unif-um}
  \sup_{x\in [0,1]} \abs{\vpm_n(x) - \vpm(x)}
  \leq \sum_{k=0}^{\infty} \abs{ a_n(k) -
    a(k)} \to 0 \text{ as } n\to\infty.
\end{align}
Since $m_n \to \delta_{0}$ weakly, where $\delta_0$ denotes the
Dirac measure concentrated at $0$, the uniform convergence of
\eqref{unif-um} above implies that
\begin{align}
  \label{limm}
  \int \vpm_n(x)\, m_n(dx) \to \vpm(0) = \sum_{k=0}^{\infty}
  \frac{\mu^k} {k! (\be)_k}= \Gamma(\be) \mu^{-(1+\be)/2}
  I_{\be-1}(2\sqrt{\mu}).
\end{align}

To compute the limit $\int \vpm_n(x)\, K_n(dx)$, we first note that
the density $K_n'(x)$ of $K_n$ with respect to the Lebesgue measure
(see \cite[Section II.1, par. 9, p.~17]{BorSal02})
satisfies
\begin{align}
  \label{isbeta}
  \frac{\tau_n}{\al_n} (\al_n + \be) K_n'(x)
  & =
  \frac{\Gamma(\al_n+1 + \be)}{\Gamma(\al_n+1) \Gamma(\be)}
  x^{(\al_n+1)-1} (1-x)^{\be -1},
\end{align}
where the right-hand side above can be recognized as the probability
density of the beta distribution with parameters $\al_n+1$ and
$\be$. As $n\to \infty$, these distributions converge weakly to 
the beta distribution with parameters $1$ and $\beta$. Thus, 
by \eqref{unif-um}, we have
\begin{align}
  \notag
  \int \vpm_n(x)\, K_n(dx)
  &\to   \frac\gm\be  \int \vpm(x)\,
  \be (1-x)^{\beta-1}\, dx
  =  \gm \sum_{k=0}^{\infty} \frac{\mu^k}{k! (\be)_k}
  \int_0^1 (1-x)^{\be -1 + k}\, dx\\
  \label{limk}
  &= \frac\gm\be  \sum_{k=0}^{\infty}
  \frac{\mu^k}{k! (\be+1)_k } \, dx
  = \gamma \Gamma(\be) \mu^{-\be/2} I_{\be}(2 \sqrt{\mu})\,.
\end{align}

\subsection{The main result.}
We now present the main result of this section.
\begin{theorem}
\label{main-3} Consider the sequence $\seq{X_n}$ of
Wright-Fisher diffusions on $[0,1)$ with generators given by
\eqref{gen-WF}, started at $X_n(0)=0$, instantaneously reflected at
$0$, and under the scaling regime \eqref{pars}. 
The sequence $\seq{ A_n }$ of rescaled and integrated diffusions, given by
\begin{align*}
  A_n(t) = \oo{\tau_n} \int_0^t X_n(u)\, du, \ t\geq 0,
\end{align*}
converges weakly, under Skorokhod's $M_1$-topology, to a
L\' evy subordinator  whose Laplace exponent is given by
\begin{align}
  \label{Phi-wf}
  \Phi(\mu) & = \gamma \sqrt{\mu} \frac{
    I_{\be}(2\sqrt{\mu})}{I_{\be-1}(2 \sqrt{\mu})},
\end{align}
where $I_{\nu}$ is the modified Bessel function of the first kind
with index $\nu$.
\end{theorem}
\begin{proof}
  We verify the conditions (a), (b) and (c) 
  of Theorem \ref{main-2}:
\begin{enumerate}[label=(\alph*)]
  \item 
  Since $\nu_n = \delta_0$ and $x_n=0$, we have 
  $\nu_n  \preceq_{x_n} m_n$;  according 
to item (\ref{it1}) of Remark \ref{after}, this implies that
$\bP^{\nu_n}_n \circ X_n(t)^{-1} \preceq_{x_n} m_n$ 
for all $t \geq 0$ and $n \in \N$. 
Thanks to \eqref{zeta-w}, the function $\zeta^m_n$ of \eqref{zetam} 
coincides with $\vpm_n$, so the second part of condition
\ref{conv-m}  follows from  
\eqref{limm} with $\mu=0$. 
\item 
We use 
\eqref{exp} and the fact that
$\bP^{\nu_n}_n \circ X_n(t)^{-1} \preceq_{x_n} m_n$ in 
\begin{align*}
  \eennn{A_n(t) - A_n(s)} = \eennn{ \oo{\tau_n} 
  \int_s^t X_n(u)\, du} \leq 
  \eenmn{\oo{\tau_n} \int_s^t X_n(u)\, du} \leq C(t-s)
\end{align*}
for some constant $C$.
\item
Since $\nu_n = \delta_0$, the first part of condition (c) is trivially
satisfied.  For the second one it suffices to 
take the quotient of \eqref{limm} and
\eqref{limk}. \qedhere
\end{enumerate}
\end{proof}

\subsection{Properties of the limiting subordinator} 
We continue this
section with some facts about the limiting Laplace functional $\Phi$
and the limiting subordinator which we denote by $A$. Given that it is only a scaling parameter, we
assume throughout that $\gamma=1$ for simplicity.

\begin{enumerate}[wide, labelindent=0pt, itemsep=1em]
  \item
  The function $\Phi$ appeared as a conditional Laplace exponent
  in the literature (see \cite[eq.~(9.s7), p.~348]{PitYor81}) in the
  following context.
  Let $X$ denote the Bessel process of index $\nu=\beta-1$ (i.e., dimension
  $\delta = 2 \beta$) started at $X_0=1$.  We define the
  process $A$ by
  \begin{align}
    A(t) = 2 \int_0^{\tau^1(t)} \inds{X_u\leq 1}\, du,\ t \in
    [0,L^1_{\infty}),
  \end{align}
  where $L^1$ and $\tau^1$ are the local and inverse local
  times of $X$ at level $1$.
  Since $X$ is transient for $\nu>0$, the process
  $\tau$ eventually jumps to $+\infty$, a.s.,
  making $A$ a proper killed subordinator.
  It turns out, however, that for each $t>0$,
  conditionally on its lifetime
  exceeding $t$ (i.e., on $\set{L_{\infty}^1 \geq t}$),
  $A$ is a L\' evy subordinator on $[0,t]$ with
  Laplace exponent $\Phi$.
  We refer the reader to
  \cite[Remark 9.8 (ii), p.~349]{PitYor81} for the
  outline of the idea of the proof, or to \cite[Corollary 2,
    p.~6]{PitYor03} for a more comprehensive treatment.
  \item Since $\Phi$ is a Laplace exponent of an infinitely-divisible
  distribution supported by $[0,\infty)$, it
  admits a L\' evy-Khinchine representation of the form
  \begin{align}
    \label{LK}
    \Phi(\mu) = b \mu + \int_0^{\infty} (1-e^{-\mu x}) \Pi(dx) \efor
    \mu>0,
  \end{align}
  where $b\geq 0$ and $\Pi$ is a measure on $(0,\infty)$ such that
  $\int \min(1,x)\, \Pi(dx)<\infty$. By \cdl{10.30}{4}, we have
  $\lim_{x\to\infty} \sqrt{2 \pi x} e^{-x} I_{\nu}(x) = 1$, so
  \begin{align*}
    \lim_{\mu \to \infty} \oo{\mu} \Phi(\mu) &= 2
    \lim_{x\to\infty}  \oo{x} \frac{ I_{\be-1}(x)}{ I_{\be}(x)} = 0 ,
  \end{align*}
  which implies that $b=0$, i.e., that $A$ has no drift.
  \item According to \cite[Theorem 1.9, p.~886]{IsmKel79}, the
  function \[ \Psi(\mu) = \frac{2\be}{ \sqrt{\mu}} \frac{
      I_{\be}(\sqrt{\mu}) }{ I_{\be-1}(\sqrt{\mu})} \ \ewith \ \mu>0,\]
  is a Laplace transform of the infinitely divisible distribution
  with density
  \begin{align*}
    f(y) = 4 \be \sum_n \exp{ - \jbn^2 y }, y\geq 0 ,
  \end{align*}
  where $\seq{\jnn}$ is an enumeration of the set of strictly positive
  zeros of the Bessel function $J_{\nu}$ of index $\nu$. We have
  \begin{align*}
    \Psi \prn{ \mu } = \frac{4\be }{\mu} \Phi\prn{ \frac{\mu}{4} },
  \end{align*}
  so that
  \begin{align*}
    \int_0^{\infty} e^{-\mu y} f(y)\, dy
    &=
    4\be \int_0^{\infty} \frac{1-e^{-\frac{\mu}{4}  x}}{\mu}
    \Pi(dx) \\
    &= \be \int_0^{\infty}
    \int_0^x e^{-\frac{\mu}{4}  y}\, dy\, \Pi(dx) \\
    &= \int_0^{\infty} e^{-\mu z}\, 4 \be\, \Pi
    \prn{ \Big[\oo{4} z,\infty\Big)}\, dz,
  \end{align*}
  for all $\mu>0$. We conclude that the L\' evy measure $\Pi$ is
  absolutely continuous with respect to the Lebesgue measure, with
  density
  \begin{align}
    \label{pi-dens}
    \pi(x) = \oo{\be} f'(4x) =
    \sum_n (2\jbn)^2 \Exp{- (2\jbn)^2  x}, \quad x>0.
  \end{align}
  \item Thanks to \eqref{pi-dens} above, we have
  \begin{align}
    \label{Ray}
    \int x^{r}\,  \Pi(dx) =  \sum_n  \int_0^{\infty} x^r (2\jbn)^2
    \Exp{ - (2\jbn)^2 x}\, dx
      = 4^{-r} \Gamma(1+r) \sum_n \jbn^{-2r}
  \end{align}
  Since the zeros of the Bessel functions grow approximately linearly;
  more precisely (see \cdl{10.21}{19}),
  \begin{align*}
    \jbn \sim \pi\prn{ n+\tot( \be-3/2) } + O(1/n), 
  \end{align*}
  for each $T\in [0,\infty)$ we have
  \begin{align*}
    \ee{ \sum_{t \leq T} (\Delta A_t)^r} =
    \begin{cases}
      +\infty, & r\leq 1/2, \eand  \\
      <+\infty, & r>1/2.
    \end{cases}
  \end{align*}

  \item When the L\' evy exponent $\Phi$ is analytic in a neighborhood
  of
  $0$, as in our case, the sequence $\seq{\kappa_n}$ of
  cumulants is defined using the Maclaurin expansion
  \begin{align*}
    \Phi(\mu) = \sum_{n=0}^{\infty} (-1)^n \kappa_n \frac{\mu^n}{n!},
  \end{align*}
  of the function $\Phi$. Their importance stems from the fact that
  they are the moments of the jump measure, i.e.,
  \begin{align*}
    \kappa_n = \int_0^{\infty} x^n \Pi(dx), \efor n\in\N.
  \end{align*}
  The explicit expression \eqref{Ray} show that, in our case, we
  have
  \begin{align*}
    \kappa_n =  4^{-n} n! \, \sigma_n(\beta-1) \ewhere
    \sigma_n(\nu) = \sum_m (\jnn)^{-2m}.
  \end{align*}
  The function $\sigma_n$ is known as the \emph{Rayleigh function},
  and satisfies the following simple convolution identity (see
  \cite[Eq.~(20), p.~531]{Kis63}), useful for efficient computation of
  cumulants and moments:
  \begin{align*}
    \sigma_n(\nu) = \oo{\nu+n} \sum_{k=1}^{n-1}  \sigma_k(\nu)
    \sigma_{n-k}(\nu), \quad \sigma_1(\nu) = \oo{4(\nu+1)}.
  \end{align*}
  Once the cumulants are known, the moments $m_n = \ee{ A(t)^n}$,
  $n\in\N$, of the distribution of $A(t)$ can be efficiently computed
  by using the following well-known recursive relationship, which is,
  in turn, a direct consequence of the formula of Fa\` a-di-Bruno:
  \begin{align*}
    m_{n+1} = t \sum_{i=0}^n (-1)^i \binom{n}{i} \kappa_{i+1}
    m_{n-i}, \ m_0 = 1.
  \end{align*}
  In particular, as is the case for any L\' evy process, $m_{n}$ is a
  polynomial in $t$ of order at most $n$.
  \item We have the following simple continued-fraction expansion of
  the Laplace exponent $\Phi$ (see \cite[Theorem 6.3,
    p.~206]{JonThr81}):
  \begin{align*}
    \Phi(\mu) = \cfrac{\mu}{\be+\cfrac{\mu}{(\be+1)+
        \cfrac{\mu}{(\be+2) + 
          \cdots \vphantom{\cfrac{1}{1}}
          }}}
  \end{align*}

\end{enumerate}

\subsection{The Feller (CIR) diffusion}
We conclude this section with an example in the context
of a sequence of Feller (CIR) diffusions. It can be seen as an
extension of the results on Wright-Fisher diffusions since
Feller processes can be interpreted as 
limits of properly rescaled Wright-Fisher diffusions as the right
endpoint of the state space tends to $+\infty$. 

We take $I_n = [0,\infty)$, $\nu_n=\delta_0$, $m_n(dx) = m'_n(x)\, dx$
and $s_n(x) = \int_0^x s'_n(y)\, dy$, where
\begin{align}
  \label{mnsn}
  m'_n(x) & = \frac{\beta^{\al_n}}{\Gamma(\al_n) } x^{\al_n-1}
  e^{-\be x} \eand s'_n(x) = e^{\beta x} x^{-\alpha_n},\  x\in [0,\infty),
\end{align}
and consider the limiting behavior of $A_n(t) = n \int_0^t X_n(u)\,
du$ in the regime 
 \begin{align}
   \label{reg-F}
 \beta > 0, \alpha_n < 1 \eand 
n \alpha_n \to \gamma > 0. 
 \end{align}
The associated infinitesimal generator 
$\sG_n$ is given by
\begin{align}
  \label{gen-F}
  \sG_n f = n x f''(x) + n (\alpha_n - \beta x) f'(x) \efor f \in
  C^2_c((0,\infty)).
\end{align}
We note that the Feller condition will
not be satisfied at $0$ in our parameter regime, so the requirement that
$m_n(\set{0})=0$, which is implicit in \eqref{mnsn}, makes the left boundary instantaneously reflective.

\smallskip

Let $U(a,b,\cdot)$ denote Kummer's $U$-function (see \cdl{13.2}{6}), so
that $u(x) = U(a,b,x)$ solves Kummer's differential equation
\begin{align}
  \label{Kummer}
  x u''(x) +(b-x) u'(x)  - a\,u(x) = 0 \efor x\in (0,\infty).
\end{align}
A direct computation shows that for $\mu>0$, the function
\begin{align*}
  \vpm_n(x) = \oo{\Gamma(V_n)} e^{L x} U(V_n, \al, S x)
\end{align*}
where
\begin{align*}
  L = \frac{\beta - \sqrt{\be^2+4\mu}}{2},\
  R = \frac{\beta + \sqrt{\be^2+4\mu}}{2},\
  S= R - L,\
  V_n = \frac{ \ld/n - \al_n L}{S}
\end{align*}
satisfies
\begin{align}
  \label{CIR-G}
  \sG_n \vpm_n(x) - (\ld + \mu n x) = 0.
\end{align}
For $a>0$ and $x>0$, the integral representation (see
\cdl{13.4}{4}))
\begin{align*}
  U(a,b,x) = \oo{\Gamma(a)} \int_0^{\infty} e^{-t x} t^{a-1}
  (1+t)^{b-a-1}\, dt \efor a,x\in (0,\infty),
\end{align*}
can be used to justify the identity
\begin{align}
  \label{int-rep}
  \vpm_n(x) = \oo{\Gamma(V_n)} \int_0^{\infty} \Exp{ -x (St - L) }
  t^{V_n-1} (1+t)^{\al -V_n -1}\, dt.
\end{align}
Since $S >0$ and $L<0$, we conclude immediately that $\vpm_n$ is
positive and strictly decreasing. This is enough (see Section
II.1, par.~10., pp.~18-19 of \cite{BorSal02}) to identify $\vpm_n$ out of all
solutions of \eqref{CIR-G} as the decreasing fundamental solution,
up to a multiplicative constant.

The representation \eqref{int-rep} yields
\begin{align*}
  \frac{\Gamma(V_n)}{\Gamma(\al_n)} \int \vpm_n(x)\, m_n(dx)
  &=
  \int t^{V_n-1} (1+t)^{\al_n-V_n-1}
  \oo{\Gamma(\al_n)} \int x^{\al_n-1} e^{-(R + St) x}
  dx\, dt \\
  &= \int (S t+ R)^{-\al_n} t^{V_n-1}
  (1+t)^{\al_n-V_n-1}\, dt  \\
  &= \int_0^1 r^{-1+V_n} (R (1-r) + Sr)^{-\al_n} \, dr ,
\end{align*}
where we use the substitution $r \leftarrow t/(1+t)$ to get
the last equality. Similarly
\begin{align*}
  \frac{\Gamma(V_n)}{\Gamma(\al_n+1)} \int \vpm_n(x) x\, m_n(dx)
  &= \int_0^1 r^{-1+V_n} (1-r) (R (1-r) + Sr)^{-\al_n-1}  \, dr
\end{align*}
Combining the integral representations given above allows us to write
\begin{align*}
  \Phi_n(\mu)
  =
  \frac{\int \vpm(x) \mu \, n x m_n(dx)}{ \int \vpm(x)\, m_n(dx)}
  =
  \frac{n \alpha_n \mu (1+V_n)\int_0^1 \frac{r^{-1+V_n}
  (1-r)}{B(V_n,2)} (R (1-r) + Sr)^{-\al_n-1}  \, dr}{ \int_0^1
  \frac{r^{-1+V_n}}{B(V_n,1)} (R (1-r) + Sr)^{-\al_n} \, dr} \, .
\end{align*}
Since $V_n \to 0$,
the sequence of beta distributions with parameters $(V_n,B)$ 
converges weakly to the Dirac mass at $0$
for any $B>0$. Moreover, since
$R, S>0$, we have
\begin{align*}
  (R(1-r) + S r)^{-\al_n} \to 1 \eand (R(1-r) + S r)^{-\al_n-1}
  \to (R(1-r) + S r)^{-1}
\end{align*}
uniformly on $[0,1]$. Therefore, since $n\alpha_n \to \gamma$, we
obtain
\begin{align*}
  \Phi(\mu) =   \lim_n \frac{\int \vpm(x) \mu \, n x m_n(dx)}{ \int
    \vpm(x)\, m_n(dx)}  =
  \frac{ 2 \gamma \mu}{ \beta+ \sqrt{\beta^2+4\mu}} =
  \frac{\gamma \beta}{2}   \prn{ \sqrt{1 + \frac{4  \mu}{\beta^2}} - 1 },
\end{align*}
the central condition of Theorem \ref{main-2}. The remaining conditions of
Theorem \ref{main-2} are verified as in the proof of Theorem
\ref{main-3}; we remark that the bound 
$\eenmn{A_n(t)-A_n(s)} \leq C(t-s)$ follows from the fact that
the barycenters of $\seq{m_n}$ scale as $1/n$ as $n\to\infty$.

The discussion above leads to the following result:
\begin{theorem}
\label{main-4}
\label{main-wf} Consider the sequence $\seq{X_n}$ of
Feller diffusions on $[0,\infty)$ with generators given by
\eqref{gen-F}, started at $X_n(0)=0$, instantaneously reflected at
$0$, and under the scaling regime \eqref{reg-F}. 
The sequence $\seq{ A_n }$ of rescaled and integrated 
diffusions, given by
\begin{align*}
  A_n(t) = n \int_0^t X_n(u)\, du, \ t\geq 0,
\end{align*}
converges weakly, under Skorokhod's $M_1$-topology, to a
L\' evy subordinator  whose Laplace exponent is given by
\begin{align}
  \label{Phi-f}
\Phi(\mu) =  
\frac{\gamma \beta}{2}   \prn{ \sqrt{1+ \frac{4  \mu}{\beta^2}} - 1 }.
\end{align}
\end{theorem}
\begin{remark}
\label{peony}
We recognize \eqref{Phi-f} as the
Laplace exponent of the inverse-Gaussian distribution with the mean
$\gamma/\be$ (typically denoted by $\mu$) and the scale parameter
$\gm^2/2$ (typically denoted by $\ld$).
\end{remark}

\appendix
\section{Proof of Theorem \ref{main-1}}
\label{app:proof}
For the sake of clarity, we divide the proof into four steps. The
stopping time $\tgt_n$, defined in \eqref{tgt}, will appear 
numerous times, so we introduce the following shortcut:
\begin{align*}
  \tau_n =  \tgt_n,
\end{align*}
where the dependence on $t$ will always be clear from context.
\smallskip

\emph{Step 1.} For $n\in\N$, let $\bQ_n$ denote the law of $A_n$ on
$D([0,\infty))$. Our first claim is that condition \ref{equi}
implies that the family $\seq{\bQ_n}$ is tight under the $M_1-$topology on $D([0,\infty))$. It will be enough to prove this fact
for the restrictions of our processes to all bounded intervals of the
form $[0,T]$ with $T>0$  (see \cite[section 12.9, pp.~414-416]{Whi02}).
We base our approach on \cite[Theorem 12.12.3, p.~426]{Whi02} which
gives two necessary and sufficient conditions
for tightness under $M_1$ on $D([0,T])$. We remark that the modulus of
continuity $w_s$ (see \cite[eq.~(4.4), p.~402]{Whi02}), 
which is a major component of the second condition in the
general case, vanishes for monotone processes.
With this simplification, the two conditions for tightness
become:
\begin{enumerate}
  \item[(i)] For each $\eps>0$ there exists $c>0$ such that
  \begin{align*}
    \ppn{ A_n(T) > c } < \eps \eforall n\in\N.
  \end{align*}
  \item[(ii)] For each $\eps>0$ and $\eta>0$, there exists $\delta>0$ such that
  \begin{align*}
    \ppn{   A_n(\dl)  \geq \eta} < \eps \eand
    \ppn{   A_n(T)  - A_n(T-\dl) \geq \eta} < \eps.
  \end{align*}
\end{enumerate}
Condition \ref{equi} implies, via Markov's inequality,
that for any $0 \leq s < t$ we have
\begin{align}
  \label{Markov}
  \ppn{ A_n(t) - A_n(s)  \geq x} =
  \ppn{ b(A_n(t) - A_n(s))  \geq b(x)} \leq \frac{a(t-s)}{b(x)}.
\end{align}
To obtain (i), we use the fact that $b(x) \to
  \infty$ as $x \to \infty$ and choose $x$ such that $a(T)/b(x) <
  \eps$. For (ii), we first take $x$ small enough to ensure $b(x) \leq \eta$, and then choose $\dl>0$ so that
$a(\dl)/b(x) < \eps$.

The $M_1$-topology is metrizable so, by Prohorov's theorem, there exists an $M_1$-weakly convergent subsequence
\begin{align}
  \label{subseq_choice}
  \seqk{\bQ_{n_k}} \text{ of } \seq{\bQ_n},
\end{align}
and we denote its limit by $\bQ$.  To keep the notation
manageable in what follows, we do not relabel the convergent
subsequence $\seqk{\bQ_{n_k}}$ and proceed as if the
original sequence $\seq{\bQ_n}$ converged. 
To prepare for the next steps, let
$(\Omega,\sF,\bP)$ be a probability space on which a
nondecreasing \cd~process $A$ with law $\bQ$ is defined.

\medskip

\emph{Step 2.} We begin by transforming condition \ref{ttoz} into a
more useful form. It implies that, for each
$t\geq 0$, there exists a strictly increasing sequence $\seqkz{n_k}$
in $\bN_0$ such that $n_0=0$ and for each $k\in\N$,
\begin{align*}
  \ppn{ \tau_n > t + (k+1)^{-1}} < (k+1)^{-1}
  \eforall n  > n_k.
\end{align*}
We then define the sequence $\seq{\eps_n}$ (which may depend on $t$)
by
\begin{align}
  \label{eps_n}
  \eps_{n} = k^{-1} \efor n_{k-1} < n \leq  n_{k}, \ k\in\N,
\end{align}
so that $\eps_n \to 0$ as $n\to\infty$. On the other hand, the
inequality
\begin{align*}
  \ppn{ \tau_n > t + \eps_n} =
  \ppn{ \tau_n > t + k^{-1}} < k^{-1} = \eps_n
  \efor n_{k-1} < n \leq n_k,
\end{align*}
implies that $\ppn{ \tau_n > t + \eps_n} < \eps_n$ for all $n$.
Consequently, condition \ref{ttoz} implies
the existence of a sequence $\seq{\eps_n}$ satisfying 
$\epsilon_n \to 0$
such that
\begin{align}
  \label{hit_faster}
  \ppn{ \tau_n > t + \eps_n} \xrightarrow[]{n \to \infty} 0 .
\end{align}

\emph{Step 3.} By \cite[Theorem 2.5.1(iv), p.~404]{Whi02}, there
exists a dense subset $\sT$ of $[0,\infty)$, that includes $0$,
such that $A_n \to A$ in the sense of finite-dimensional distributions on
$\sT$, i.e., such that for all $K\in\N$ and all $t_1,\dots, t_K\in
  \sT$ we have
\begin{align}
  \label{fdd}
  \prn{\Big. A_n(t_1),\dots, A_n(t_K) }
  \tol{\sD} \prn{\Big. A(t_1), \dots,
    A(t_K) }.
\end{align}
We fix $t,\delta \geq 0$ and define the sequence $\seq{F_n}$ of
random variables by
\begin{align*}
  F_n = f(A_n(t_1),\dots, A_n(t_K)),\efor K\in\N \eand
  0\leq t_1\leq \dots \leq
  t_K \leq t,
\end{align*}
where $t_1,\dots, t_K$, $t$, $t+\delta$ belong $\sT$ and $f:\R^K \to \R$
is continuous, bounded, and bounded away from $0$.
For each $n$ and
each bounded Lipschitz function $g:\R\to\R$, we have
\begin{align*}
  \een{F_n g\prn{\Big. A_n(t+\delta) - A_n(t)}}=
  I_n^1 + I_n^2 + I_n^3,
\end{align*}
where
\begin{align*}
  I_n^1 &= \een{F_n\,  g\prn{\big. A_n(t+\delta) - A_n(t) }
    \inds{\tau_n > t+\eps_n}}, \\
  I_n^2 & = \een{\Big.F_n \,
    \prn{\Big. g\prn{\big. A_n(t+\delta) - A_n(t)  } -
      g\prn{\big. A_n(\tau_n + \delta) - A_n(\tau_n)}}
    \inds{\tau_n \leq t+\eps_n} }\\
  \intertext{and}
  I_n^3 &=
  \een{\Big.F_n \,
    \prn{\Big.  g\prn{\big. A_n(\tau_n+\delta) - A_n(\tau_n) }
    }
    \inds{\tau_n \leq t+\eps_n }
  },
\end{align*}
with $\seq{\eps_n}$ given by \eqref{eps_n}.

\medskip

Let $C$ denote a generic constant, independent of $n$, but possibly
depending on $f$ and $g$. As is customary, we allow $C$ to change
from occurrence to occurrence. The relation \eqref{hit_faster} above
implies that
\begin{align}
  \label{e_one}
  \abs{I_n^1}
  \leq C\, \ppn{ \tau_n > t+\eps_n} \to 0
  \text{ as } n\to\infty.
\end{align}
Moving on to $I_n^2$, we pick a constant $c>0$ (to be determined
later) and split it further into two parts
\begin{align*}
  I^{2;\leq c}_n = I_n^2 \inds{A_n(t+\eps_n+\delta)\leq c} \eand
  I^{2;> c}_n = I_n^2 \inds{A_n(t+\eps_n+\delta)> c},
\end{align*}
which we estimate
separately.

Owing to the uniform boundedness of $F_n$ and the Lipschitz property
of $g$ we  have
\begin{align*}
  I_n^{2; \leq c} & =\een{
    \abs{\Big.
      g\prn{ \Big.
        A_n(t+\delta) - A_n(t)
      } -
      g\prn{ \Big.
        A_n(\tau_n+\delta) - A_n(\tau_n)}}
    \inds{\tau_n \leq t+\eps_n, A_n(t+\eps_n+\delta) \leq c} } \\
  &\leq \een{
    \abs{\Big.
      A_n(t+\delta) - A_n(t)  -
      A_n(\tau_n+\delta) + A_n(\tau_n)}
    \inds{\tau_n \leq t+\eps_n, A_n(t+\eps_n+\delta) \leq c} } \\
  &\leq
  C\een{\prn{\Big.
      \abs{
        A_n(\tau_n) - A_n(t)} + \abs{
        A_n(\tau_n+\delta) - A_n(t+\delta)}}
    \inds{\tau_n \leq t+\eps_n, A_n(t+\eps_n+\delta) \leq c}
  } \\
  &\leq C\een{\Big.
    \prn{\Big.
      A_n(t+\eps_n) - A_n(t) +
      A_n(t+\delta+\eps_n) - A_n(t+\delta)}
    \inds{A_n(t+\eps_n+\delta) \leq c}
  }. 
\end{align*}
Since $b$ is concave and $b(0)=0$, we have
\begin{align*}
  x \leq \frac{c}{b(c)} b(x) \eforall 0 < x\leq c,
\end{align*}
so that on $\set{A_n(t+\eps_n+\dl)\leq c}$ we have
\begin{align*}
  A_n(t+\eps_n) - A_n(t) \leq \frac{c}{b(c)} b\Big(A_n(t+\eps_n) -
  A_n(t)\Big)
\end{align*}
as well as
\begin{align*}
  A_n(t+\dl+\eps_n) - A_n(t+\dl) \leq \frac{c}{b(c)}
  b\Big(A_n(t+\dl+\eps_n) -A_n(t+\dl)\Big).
\end{align*}
Therefore,
\begin{align*}
  I_n^{2; \leq c} \leq
  C \frac{c}{b(c) }
  \een{
    b\Big(A_n(t+\eps_n) - A_n(t)\Big) +
    b\Big(A_n(t+\delta+\eps_n) - A_n(t+\delta)\Big)
  } \leq C \frac{c}{b(c)} a(\eps_n).
\end{align*}
Moreover, by the boundedness of $g$ and the estimate
\eqref{Markov}, we obtain
\begin{align*}
  I^{2; >c}_n &=\een{ \abs{\Big.
      g\prn{ \Big. A_n(t+\delta) - A_n(t) } -
      g\prn{ \Big. A_n(\tau_n+\delta) - A_n(\tau_n)}}
    \inds{\tau_n \leq t+\eps_n, A_n(t+\eps_n+\delta) > c} } \\
  &\leq C \ppn{ A_n(t+\eps_n+\delta) > c} \leq C
  \frac{a(t+\sup_n \eps_n+\delta)}{b(c)}.
\end{align*}
By taking $c$ sufficiently large we can make $I^{2; >c}_n$ arbitrarily
small, uniformly in $n$. With that $c$ fixed, we have $I^{2; \leq
      c}_n \to 0$ as $n \to \infty$, so that $\abs{I^2_n} \to 0$ as
$n\to\infty$.

Lastly, by \eqref{additive} and the strong Markov property
\eqref{smg}, for each $n\in\N$, there exists a bounded and measurable
function $\tg_n:E_n \to \R$ such that
\begin{align*}
  I_n^3 & =
  \een{  F_n g\prn{\Big.  A_n(\tau_n + \delta) - A_n(\tau_n) }
    \inds{\tau_n \leq t+\eps_n }}\\
  & =
  \een{  F_n g\prn{\Big. A_n(\delta) \circ \theta_{\tau_n} }
    \inds{\tau_n \leq t+\eps_n }}\\
  &=
  \een{  F_n \een{ g\prn{\Big. A_n(\delta) \circ \theta_{\tau_n} }
      \inds{\tau_n \leq t+\eps_n } \giv \sF_n(\tau_n)}}\\
  &= \een{ F_n \inds{\tau_n \leq t+\eps_n }} \tg_n(x_n).
\end{align*}
Thanks to \eqref{fdd},
\begin{align*}
  &\ee{f(A(t_1), \dots, A(t_K)) g( A(t+\delta) - A(t))} =
  \lim_n \een{ F g\prn{ A_n(t+\delta) - A_n(t) }}  \\
  & \qquad = \lim_n (I^1_n+I^2_n+I^3_n) = \lim_n \een{ F_n \inds{\tau_n
      \leq t+\eps_n}}\tg_n(x_n).
\end{align*}
As in \eqref{e_one} above, we have 
$\een{ F_n \inds{\tau_n > t+\eps_n}} \to 0$ so that
\begin{align*}
  \lim_n \een{ F_n \inds{\tau_n \leq t+ \eps_n}}
  &= \ee{ f(A(t_1),\dots, A(t_K))}.
\end{align*}
Since $f$ is bounded away from $0$, we conclude that
\begin{align}
  \label{lim_Q_f}
  \frac{
    \ee{ f(A(t_1),\dots, A(t_K)) g(A(t+\delta) -
      A(t))}}{
    \ee{ f(A(t_1),\dots, A(t_K))} } = \lim_n \tg_n(x_n).
\end{align}
As the process $A$ is \cd, \eqref{lim_Q_f} holds for all $K\in\N$,
and all $0 \leq t_1\leq \dots \leq t_K \leq t < \infty$, $\delta
  \geq 0$ --- not only those in $\sT$. Also, since the right-hand side
depends neither on $f$ nor on $t$, the random variable $A(t+\delta)
  - A(t)$ is independent of $\sigma(A_s, s\leq t)$ and its
distribution does not depend on $t$. In other words, $A$ has
stationary and independent increments. Since $A_n(0)=0$ for each $n$
and $M_1$-convergence implies convergence in distribution at $0$, we
conclude that $A(0)=0$, as well. Being right-continuous and
nondecreasing, $A$ is, therefore, a L\' evy subordinator.

\medskip

\emph{Step 4.} To close the loop and complete the proof, we use
condition \ref{reso}. The space $D([0,\infty))$ is $J_1-$separable,
where $J_1$ refers to Skorokhod's $J_1-$topology (see \cite[Section
  3.3., p.~78]{Whi02}). Since the $M_1-$topology is weaker than the $J_1-$topology 
(see \cite[Theorem 12.3.2, p.~398]{Whi02}), and $D([0,\infty))$ is
separable under $J_1$ (see \cite[p.112]{Bil99}), we have that
$D([0,\infty))$ is $M_1-$separable as well. Therefore, we can use the
Skorokhod representation theorem (see \cite[Theorem 3.2.2,
  p.~78]{Whi02}) to couple the laws of $\seq{A_n}$ and $A$ on the same
probability space such that $A_n \to A$ in $M_1$ almost surely. Next, we
recall that, for right-continuous, nondecreasing functions,
convergence on a dense set to a right-continuous function implies
convergence at every continuity point of the limit (see, e.g.,
the proof of \cite[Theorem 6.20, p.~142]{Kal21}, for the standard
argument). From this, we conclude that for nondecreasing functions
$M_1-$convergence implies convergence almost everywhere with respect
to Lebesgue measure. This is enough to establish that for any nonnegative,
continuous, and bounded function $f : \R^2 \to \R$, integral
functionals of the form
\begin{align}
  \label{int-func}
  y \mapsto \int_0^t f(u, y_u)\, du,
\end{align}
are continuous in the $M_1-$topology when restricted to the set of
nondecreasing functions in $D([0,\infty))$. The dominated convergence
theorem yields
\begin{align*}
  \een{ \int_0^{\infty} e^{-\ld t} e^{-\mu A_n(t)}\, dt} \to
  \ee{ \int_0^{\infty} e^{-\ld t} e^{-\mu A(t)}\, dt},
\end{align*}
for all $\ld>0$ and $\mu \geq 0$. Combined with condition
\eqref{lim_res}, this implies that for some $\ld>0$ we have
\begin{align*} \ee{ \int_0^{\infty} e^{-\ld t} e^{-\mu A(t)}\, dt}
  = R^{\ld,\mu} \eforall \mu\geq 0.\end{align*}
Moreover, since $A$ is a L\' evy subordinator, we have
\begin{align*}
  R^{\ld,\mu}
  &= \int_0^{\infty} e^{-\ld t} \ee{ e^{ - \mu A(t)}}\, dt
  = \int_0^{\infty} e^{-\ld t}  e^{-t \Phi(\mu)}\, dt =
  \oo{\ld+\Phi(\mu)},
\end{align*}
where $\Phi$ is the Laplace exponent of $A$. Since $\Phi$ completely
characterizes the distribution of $A(1)$, and, thus, the law of the
entire L\' evy process $A$, we conclude that the limit is the same
for each choice of a convergent subsequence in
\eqref{subseq_choice}. Hence, the entire sequence
$\seq{A_n}$ converges in law to $A$ under the $M_1-$topology.

\section*{Acknowledgments}

  During the preparation of this work the first named author was
  supported by the National Science Foundation under Career Award
  DMS-2239679, and the second named author by the  National
  Science Foundation under Grant DMS-2307729. Any opinions, findings
  and conclusions or recommendations expressed in this material are
  those of the author(s) and do not necessarily reflect the views of
  the National Science Foundation (NSF).


\newcommand{\etalchar}[1]{$^{#1}$}
\providecommand{\bysame}{\leavevmode\hbox to3em{\hrulefill}\thinspace}
\providecommand{\MR}{\relax\ifhmode\unskip\space\fi MR }
\providecommand{\MRhref}[2]{%
  \href{http://www.ams.org/mathscinet-getitem?mr=#1}{#2}
}
\providecommand{\href}[2]{#2}

\end{document}